\def\bbuildrel#1_#2^#3{\mathrel{\mathop{\kern 0pt#1}\limits_{#2}^{#3}}}
\def\do{\hbox to 20pt{\rightarrowfill}}
\def\N{\mathbb N}
\def\ms{\medskip}
\def\noi{\noindent}
\def\w{\thinspace\hbox{\hsize 14pt \rightarrowfill}\thinspace}
\def\0{\hbox{$\emptyset$}}
\def\s{\hbox{$\sigma$}}
\def\sub{\subseteq}
\def\H{\mathscr{H}}
\def\U{\mathscr{U}}
\def\Haus{\mathscr{H}}
\newcommand{\sk}{{\N}^{<\N}}
\newcommand{\cantor}{2^{\N}}
\newcommand{\baire}{\N^{\N}}
\newtheorem{theorem}{Theorem}[section]
\newtheorem{lemma}[theorem]{Lemma}
\newtheorem{proposition}[theorem]{Proposition}
\numberwithin{equation}{section}
\newtheorem{claim}[theorem]{Claim}
\newtheorem{subtheorem}{Theorem}[subsection]
\newtheorem{subproposition}[subtheorem]{Proposition}
\newtheorem{subcor}[subtheorem]{Corollary}
\newtheorem{subclaim}[subtheorem]{Claim}
\newtheorem{subremark}[subtheorem]{Remark}
\begin{document}

\title{On Borel maps, calibrated \s-ideals and homogeneity}

\author{R. Pol and P. Zakrzewski}
\address{Institute of Mathematics, University of Warsaw, ul. 
02-097 Warsaw, Poland}
\email{pol@mimuw.edu.pl, piotrzak@mimuw.edu.pl}


\subjclass[2010]{03E15, 54H05, 28A78, 54F45}


\keywords{Borel mapping, \s-ideal, meager set, infinite dimension, \s-finite measure}

\begin{abstract}
Let $\mu$ be a Borel measure on a compactum $X$. The main objects in this paper are \s-ideals $I(dim)$, $J_0(\mu)$, $J_f(\mu)$ of Borel sets in $X$ that can be covered by countably many compacta which are finite-dimensional, or of $\mu$-measure null, or of finite $\mu$-measure, respectively. Answering a question of J. Zapletal, we shall show that for the Hilbert cube, the \s-ideal $I(dim)$ is not homogeneous in a strong way. We shall also show that in some natural instances of measures $\mu$ with non-homogeneous \s-ideals
$J_0(\mu)$ or  $J_f(\mu)$, the completions of the quotient Boolean algebras $Borel(X)/J_0(\mu)$ or $Borel(X)/J_f(\mu)$ may be homogeneous. 

We discuss the topic in a more general setting, involving calibrated \s-ideals.

\end{abstract}

\maketitle

\section{Introduction}\label{sec:1}
The results \replaced{of }{outlined in} this paper provide more information on the topic investigated in our articles \cite{p-z-1}, \cite{p}, \cite{p-z-2}, which were strongly influenced by the work of Zapletal \cite{zap}, \cite{zap2},  Farah and Zapletal \cite{f-z} and Sabok and Zapletal \cite{s-z}.

Given a subset $E$ of a compactum (i.e., a compact metrizable space) \added{or, more generally, of a Polish (i.e., a separable completely  metrizable)} space $X$, we denote by $Bor(E)$ 
the \s-algebra of Borel sets in $E$, and 
$K(E)$ is the collection of compact subsets of $E$.

A {\sl \s-ideal 
on $X$} is a collection $I\sub Bor(X)$, closed under taking Borel subsets and countable unions of elements of $I$;   it is {\sl generated by compact sets} if   any element of $I$ can be enlarged to a \s-compact set in $I$. We usually assume that $X\notin I$.

A \s-ideal $I$ generated by compact sets in $X$ is {\sl calibrated} if  for any $K\in K(X)\setminus I$ and 
$K_n\in I\cap K(X)$, $n\in \N$, there is a compact set $L\sub K\setminus\bigcup\limits_{n\in\N}K_n$ not in $I$,
cf. Kechris, Louveau and Woodin \cite{k-l-w}.

Let us recall that a compactum is {\sl countable-dimensional} if it is a union of countably many zero-dimensional sets, cf. \cite{E}.

One of the main results in this paper is the following theorem.

\begin{theorem}\label{main}
	Let $I$ be a calibrated  \s-ideal on a compactum $X$ without isolated points, containing all singletons, and let $f:B\w Y$ be a Borel map from $B\in Bor(X)\setminus I$ to a compactum $Y$ without isolated points. Then
	\begin{enumerate}

\item[(i)] there exists a compact meager set $C\sub Y$ with $f^{-1}(C)\not\in I$,

\item[(ii)] if $Y$ is countable-dimensional, there is a zero-dimensional compactum $C$ in $Y$ with $f^{-1}(C)\not\in I$,

\item[(iii)] for any \s-finite nonatomic Borel measure $\mu$ on $Y$, there is a compact set $C$ in $Y$ with $\mu(C)<\infty$ and $f^{-1}(C)\not\in I$.

\end{enumerate}
	
\end{theorem}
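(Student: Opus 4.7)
The plan is to push the problem into $Y$ via the $\sigma$-ideal
\[
\tilde{J} = \{A \in Bor(Y) : f^{-1}(A) \in I\},
\]
and to produce, in each of the three cases, a compact $C \subseteq Y$ with $C \notin \tilde{J}$ of the required type. I would first reduce to the case that $B$ is compact and $f|_B$ is continuous by a standard Luzin-type selection for calibrated $\sigma$-ideals generated by compact sets (cf.\ \cite{p-z-1}, \cite{p-z-2}). With this reduction in place, $\tilde{J}$ is a proper $\sigma$-ideal on $Y$ with $Y \notin \tilde{J}$ inheriting two features from $I$: every Borel $A \notin \tilde{J}$ contains a compact subset $C \notin \tilde{J}$ (take a compact $M \subseteq f^{-1}(A)$ with $M \notin I$ and put $C := f(M)$, so that $f^{-1}(C) \supseteq M$); and $\tilde J$ has the splitting property of calibration, namely for compact $K \notin \tilde{J}$ and compact $K_n \in \tilde{J}$ there is a compact $L \subseteq K \setminus \bigcup_n K_n$ with $L \notin \tilde{J}$ (apply the calibration of $I$ to $f^{-1}(K)$ and $f^{-1}(K_n)$ in $X$, then push the resulting compact forward via $f$).

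Parts (ii) and (iii) then reduce to decomposition. For (ii), I would write $Y = \bigcup_k Z_k$ with $Z_k$ zero-dimensional Borel; some $Z_k$ is not in $\tilde{J}$, and the compact selection above yields a compact $C \subseteq Z_k$ with $C \notin \tilde{J}$, which is zero-dimensional as a subspace of $Z_k$. For (iii), the same argument with $Y = \bigcup_n Y_n$, $\mu(Y_n) < \infty$, produces a compact $C \subseteq Y_n$ with $\mu(C) \leq \mu(Y_n) < \infty$ and $C \notin \tilde{J}$.

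Part (i) cannot be settled by decomposition since $Y$ is not meager in itself, so I would split into two cases. If some fibre $f^{-1}(\{y_0\})$ is not in $I$, then $C := \{y_0\}$ works --- it is meager as $Y$ has no isolated points. Otherwise every singleton of $Y$ lies in $\tilde{J}$. Fix a countable dense set $\{d_n : n \in \N\} \subseteq Y$; each $\{d_n\}$ is a compact element of $\tilde{J}$, while $Y$ itself is compact and not in $\tilde{J}$. The splitting property applied to $K = Y$ and $K_n = \{d_n\}$ yields a compact $C \subseteq Y \setminus \{d_n : n \in \N\}$ with $C \notin \tilde{J}$. Since $C$ misses a dense set it has empty interior in $Y$, hence is nowhere dense and therefore meager; by the definition of $\tilde{J}$, $f^{-1}(C) \notin I$.

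The main obstacle I expect is the initial Luzin-type reduction to continuous $f$ on a compact set outside $I$; once that is in hand, (ii) and (iii) are routine decomposition arguments, and the non-obvious part (i) collapses to the transfer of calibration from $I$ to $\tilde J$ combined with the elementary observation that a compact subset of $Y$ avoiding a countable dense set is nowhere dense. The slight subtlety in transferring calibration --- that one needs $f(M)$ to be compact and to satisfy $f^{-1}(f(M)) \supseteq M$ --- is precisely what forces the preliminary reduction to a continuous $f$ on a compact $B \notin I$.
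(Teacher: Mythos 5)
Your opening move --- ``reduce to the case that $B$ is compact and $f|_B$ is continuous by a standard Luzin-type selection'' --- is not available, and this is precisely where the difficulty of the theorem lives. Calibration does not give any inner regularity: a Borel set $B\notin I$ need not contain \emph{any} compact set outside $I$, so there is in general no compact $I$-positive set on which $f$ could be made continuous. A concrete counterexample is one of the paper's central cases: take $I=J_0(\lambda)$ on $[0,1]$ (calibrated, nonatomic, contains all singletons) and let $B$ be a $\lambda$-null dense $G_\delta$. Then $B\notin J_0(\lambda)$ (compact null sets are nowhere dense and $B$ is comeager), yet every compact subset of $B$ is null, hence in $I$. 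The only general tool is Solecki's theorem, which produces a $G_\delta$ set $G\subseteq B$ (not compact) with $f|G$ continuous and every nonempty relatively open subset of $G$ outside $I$. The same failure of inner regularity undermines the rest of your plan: the claim that every Borel $A\notin \tilde J$ contains a compact $C\notin \tilde J$, the transfer of calibration to $\tilde J$ (calibration of $I$ applies only to \emph{compact} $I$-positive sets, and $f^{-1}(K)$ is merely Borel), and the decomposition arguments for (ii) and (iii), where $f^{-1}(Z_k)$ and $f^{-1}(Y_n)$ are only Borel and need not contain compact $I$-positive subsets. Note also that the paper treats Lusin-type restriction theorems (the classical Lusin theorem for $J_0(\lambda)$, Proposition 6.2 of \cite{p-z-2} for $\Haus^1$) as special properties of particular measures --- indeed they are exactly what yields \emph{non}-homogeneity in Section \ref{I(mu)} --- not as consequences of calibration.

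The paper's proof has to work around this obstruction rather than assume it away: inside the Solecki set $G$ one builds a generalized Hurewicz system, using calibration (via the compact-valued maps $\widecheck{f}_U$ associated with $f^{-1}$) to plant compacta $L_s\notin I$ in the closure of the system, so that the determined set $P\subseteq G$ satisfies $P\notin I$ by a Baire category argument, while simultaneously the sets $M_s$ force the compactum $\overline{f(P)}=f(P)\cup\bigcup_s M_s$ to be meager, zero-dimensional, or of finite measure in the three cases; then $C=\overline{f(P)}$ witnesses the theorem since $f^{-1}(C)\supseteq P$. Your case split in (i) (a big fiber versus a compact set avoiding a countable dense set) does echo the two cases in the paper's Claim \ref{claim for (i)}, but there the dichotomy is applied locally to the maps $\widecheck{f}_U$ to feed the inductive construction, not globally to $Y$; the global shortcut only controls $f(P)$, not its closure, and controlling that closure is the actual content of the theorem.
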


The statement in (i) strengthens a result in \cite{p-z-2} concerning the 
``1-1 or constant" property of Sabok and Zapletal \cite{s-z}, \cite{sa} (some deep refinements of this result, in another direction, are given in the book by Kanovei, Sabok and Zapletal \cite[Section 6.1.1]{k-s-z}), cf. Section \ref{1-1 or constant}. 


\smallskip

To comment on (ii), let us recall the notion of homogeneity of \s-ideals introduced by Zapletal \cite{zap}, \cite{zap1}: a \s-ideal $I$ on a Polish space  $X$ is {\sl homogeneous}, if for each $E\in  Bor(X)\setminus I$ there exists a Borel map $f:X\w E$ such that $f^{-1}(A)\in I$, whenever $A\in I$.

\smallskip 

Now, (ii) implies that  the \s-ideal $I(dim)$  of Borel sets in the Hilbert cube $[0,1]^{\N}$ that can be covered by countably many finite-dimensional compacta is not homogeneous in a strong way: there are compacta $X$, $Y$ in  $[0,1]^{\N}$ not in $I(dim)$ such that for any Borel map $f:B\to Y$ on $B\in Bor(X)\setminus I(dim)$ there is a zero-dimensional compactum $C$ in $Y$ with $f^{-1}(C)\not\in I(dim)$.
Combined with a theory developed by Zapletal \cite{zap}, it shows that the forcings associated with the collections $Bor(X)\setminus I(dim)$ and $Bor(Y)\setminus I(dim)$, partially ordered by inclusion, are not equivalent. This provides an answer to a question by Zapletal \cite{zap2}, cf. Section \ref{I(dim)} for more details.

\smallskip 

In the context of homogeneity we shall discuss also \s-ideals $J_f(\mu)$ and $J_0(\mu)$ associated with  Borel measures $\mu$ on compacta $X$:  $J_f(\mu)$  ($J_0(\mu)$) is the collection of Borel sets in $X$ that can be covered by countably many compact sets of finite $\mu$-measure (of $\mu$-measure zero, respectively, cf.  \cite{ba-ju} and \cite{sa}). 

 The \s-ideal $J_0(\mu)$ is calibrated, and hence, if $\mu$ is  \s-finite and  nonatomic, (iii) shows that for any Borel map $f: B\w X$ on $B\in Bor(X)\setminus J_0(\mu)$, there is a compact set $C$ in $X$ with $\mu(C)<\infty$ and  $f^{-1}(C)\notin J_0(\mu)$, cf. Section \ref{comparing algebras}(A) for additional information.

The classical Lusin theorem shows that $J_0(\lambda)$ is not homogeneous for the Lebesgue measure $\lambda$ on $[0,1]$, and a refinement of the Lusin theorem, cf. \cite[Proposition 6.2]{p-z-2}, provides non-homogeneity of the \s-ideal $J_f(\H^1)$ associated with the 1-dimensional Hausdorff measure $\H^1$ on the Euclidean square $[0,1]^2$ (cf. Corollary \ref{Haus-inhom}).

\smallskip 

Shifting our attention from the \s-ideals
$J_0(\mu)$ and $J_f(\mu)$ to the collections $Bor(X)\setminus J_0(\mu)$ and 
$Bor(X)\setminus J_f(\mu)$, we get a different picture concerning homogeneity.

\smallskip 

Let us recall that a Borel measure $\mu$ on a compactum $X$ is  {\sl semifinite} if each Borel set of positive $\mu$-measure contains a Borel set of finite positive $\mu$-measure (\s-finite Borel measures and Hausdorff measures on Euclidean cubes are semifinite, cf. \cite{r}).

\begin{theorem}\label{quotients}
	Let $\mu$ be a 
	 nonatomic Borel measure on a  compactum $X$. 
	\begin{enumerate}
		
		\item[(i)] 
		Assume that  $X\not\in J_f(\mu)$ and every Borel set $B\not\in J_f(\mu)$ contains a Borel set 
		$C\not\in J_f(\mu)$ with $\mu(C)<\infty$.
		Then  the completion of the quotient Boolean algebra $Bor(X)/J_f(\mu)$ is homogeneous and 
		isomorphic to the completion of the quotient Boolean algebra $Bor([0,1]^2)/J_f(\Haus^1)$.

		\item[(ii)] If $\mu$ is semifinite, 
		then the completion of the quotient Boolean algebra $Bor(X)/J_0(\mu)$ is homogeneous and isomorphic to 	 the completion of the quotient Boolean algebra  $Bor([0,1])/J_0(\lambda)$.

	 	\end{enumerate}
	
\end{theorem}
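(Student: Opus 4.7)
I outline a common strategy for both parts. Write $(X,\m,I)$ for $(X,\m,J_f(\m))$ in case (i) or $(X,\m,J_0(\m))$ in case (ii), and let $(Y,\nu,J)$ denote the reference pair $([0,1]^2,\Haus^1,J_f(\Haus^1))$ or $([0,1],\lambda,J_0(\lambda))$ respectively. The plan is to identify, inside each of the two forcings, a canonical ``inner'' family of compact pieces of finite positive measure, match the two families via a measure-theoretic Borel transfer, and upgrade the resulting matching to an isomorphism of the completions; homogeneity will follow by a self-similarity argument.

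First, I would show that the family
\[
\D(X) \;=\;\{K \in K(X):\m(K)<\infty,\;K\notin I\}
\]
is predense in the forcing $(Bor(X)\sm I,\sub)$, in the sense that every $[B]\ne 0$ has nonzero meet with some $[K]$ with $K\in\D(X)$. In part~(i) this is immediate from the hypothesis. In part~(ii) it follows by applying Theorem~\ref{main}(iii) to the identity map on $B$: the calibrated character of $J_0(\m)$ combined with semifiniteness of $\m$ produces a compact $K\sub X$ of finite $\m$-measure with $B\cap K\notin I$. The analogous predensity of $\D(Y)$ is established in the same way.

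Next, I would run a simultaneous fusion-tree construction in $X$ and $Y$, building compatible Cantor schemes $(K_s)_{s\in 2^{<\N}}\sub\D(X)$ and $(L_s)_{s\in 2^{<\N}}\sub\D(Y)$ of pairwise disjoint refinements of shrinking diameter, arranged so that at every stage the collection of scheme nodes matched so far captures (modulo the ideal) every element of a previously enumerated countable base for the quotient. Each pair $(K_s,L_s)$ is equipped with a measure-preserving Borel bijection $\phi_s\colon(K_s,\m|_{K_s})\to(L_s,\nu|_{L_s})$ supplied by the classical Borel isomorphism theorem for finite nonatomic measures. Applying Lusin's theorem at each level realizes $\phi_s$ as continuous off a Borel residual, which by the calibrated property of both $I$ and $J$ can be absorbed into the ideal. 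The limit of this construction is an order-isomorphism between dense subsets of $\widehat{Bor(X)/I}$ and $\widehat{Bor(Y)/J}$, which uniquely extends to the required isomorphism of completions. Homogeneity is then obtained by repeating the construction below any $[A]\ne 0$ to identify the relative algebra $\widehat{Bor(Y)/J}|_{[A]}$ with $\widehat{Bor(Y)/J}$ itself.

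The principal obstacle lies in the middle step. Borel measure isomorphisms preserve measure but not the topological $\s$-compact structure defining $I$ and $J$, so they do not \emph{a priori} transport the ideal faithfully. Converting a Borel measure bijection into one that descends to the quotient Boolean algebra requires the iterated Lusin approximation inside the fusion, with the calibrated character of both ideals crucial for absorbing the residual errors at every stage. Ensuring simultaneously that the resulting scheme is genuinely dense in the completion --- so that every Borel class is captured modulo the ideal, including those (such as $\m$-null sets outside $J_0(\m)$) arising from non-measure-theoretic phenomena --- is what makes this step technically delicate.
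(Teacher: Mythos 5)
There is a genuine gap --- in fact two, one for each half of your plan. First, the predensity step collapses in case (i): a compact set $K$ with $\mu(K)<\infty$ belongs to $J_f(\mu)$ by definition (it covers itself), so your family $\{K\in K(X):\mu(K)<\infty,\ K\notin J_f(\mu)\}$ is empty. The finite-measure sets outside $J_f(\mu)$ supplied by the hypothesis of (i) are necessarily non-$\sigma$-compact (e.g.\ dense $G_\delta$ subsets of $[0,1]^2$ of $\Haus^1$-measure zero), so no Cantor scheme of compacta drawn from this family exists, and the fusion cannot even start.

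Second, and more fundamentally, the transfer step fails. A measure-preserving Borel bijection $\phi_s$ respects null sets and measures, but membership in $J_0(\mu)$ or $J_f(\mu)$ is not a measure-theoretic property: it asks for a covering by compact null (resp.\ finite-measure) sets, and there exist measure-zero Borel sets outside $J_0(\mu)$ --- their existence is precisely what makes these ideals nontrivial. Hence the Lusin residual, being merely of small \emph{measure}, cannot be ``absorbed into the ideal''; calibration does not help (it produces large compacta avoiding given small ones, it does not convert measure-smallness into ideal-smallness, and $J_f(\mu)$ is not even claimed to be calibrated). A further obstruction is that your fusion is to capture ``a previously enumerated countable base for the quotient,'' but by Proposition \ref{cellularity} these quotients have cellularity $\co$, so no countable dense set exists and the limit of the fusion cannot be dense in the completion. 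The paper's route is genuinely different: Theorem \ref{homogeneity-Lwow} builds, via a generalized Hurewicz system, one measure-null copy of the irrationals $P$ with $\overline P=P\cup\bigcup_s L_s$, the $L_s$ being disjoint Cantor sets carrying all the measure, and then embeds $P$ \emph{homeomorphically} into any positive Borel set by gluing Oxtoby's measure-preserving homeomorphisms of the $L_s$ via Pollard's extension theorem, so that $\mu(\overline{h(C)})=\Haus^1(\overline C)$ for every relatively closed $C\sub P$; it is this identity between closures, not a Borel measure isomorphism, that transports the ideal. Homogeneity is then obtained not from an onto matching below each condition but from the density of conditions carrying a copy of the fixed algebra together with cellularity $\co$, which yields that every relative completion is a product of continuum many copies of that algebra.
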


In particular, 
 the partial order 
$Bor([0,1]^2)\setminus J_f(\Haus^1)$ is forcing homogeneous, while the \s-ideal 
$J_f(\Haus^1)$ is not homogeneous, and the same is true if $J_f(\Haus^1)$ is replaced by $J_0(\lambda)$.
It seems that examples illustrating this phenomenon did not appear in the literature, (cf.  \cite{zap}, comments following Definition 2.3.7).

Let us however remark that if $\mu$ is a \s-finite nonatomic Borel measure on a  compactum $X$ not in $J_f(\mu)$, then the \s-ideal  $J_f(\mu)$ can be homogeneous, cf. Proposition \ref{counterexample}. 

\ms

The proof of Theorem \ref{main} 
is presented in Sections \ref{proof of (i)}, \ref{proof of (ii)} and \ref{proof of (iii)}.  They are preceded by  Section \ref{sec:2} containing some preliminaries.  Our approach is similar to that in \cite{p} and \cite{p-z-2}, an essential difference being that we shall analyze compact-valued functions $\widecheck{f}_U:Y\w K(X)$ associated with $f^{-1}$ rather than functions
  $\widehat{f}_U:\overline{U}\w K(Y)$ considered in \cite{p} or \cite{p-z-2}, associated with $f$.
 
 Theorem \ref{quotients} is based on results of Oxtoby \cite{Ox} and its proof is presented in Section \ref{homo}.

 \section{Preliminaries} \label{sec:2} 
 Our notation is standard and mostly agrees with \cite{k}. In particular,
 
 \begin{itemize}
 	
 	\item $\N=\{0,1,\ldots\}$,
 	
 	\item $\sk$ is the family of all finite sequences of natural numbers,
 	
 	\item in a given metric space: $\hbox{diam}(A)$  is the diameter of  $A$, $B(x,r)$ is the open $r$-ball  centered at $x$ and $B(A,\varepsilon)$ is the open $\varepsilon$-ball around $A$.

 \end{itemize}
 
 The terminology concerning Boolean algebras
 agrees with \cite{ko}.
 
 As in our earlier work on this topic, the key element of our reasonings are generalized Hurewicz systems, cf. \cite[2.4]{p-z-2}; such systems were introduced in some special cases by W. Hurewicz \cite{hu}, and significantly developed by S. Solecki \cite{s1} in connection with \s-ideals generated by closed sets.

Let $X$ be a compactum without isolated points. In this paper by a {\sl generalized Hurewicz system} we shall mean (adopting  a slightly more restrictive definition  than in \cite[2.4]{p-z-2}) a pair 
$(U_s)_{s\in\sk}$, $(L_s)_{s\in\sk}$ of families of subsets of $X$ with the following properties, where $G$ is  a given non-empty $G_{\delta}$-set in  $X$, the diameters are with respect to a fixed complete metric on $G$  and the closures are taken in $X$:
 \begin{itemize}
 	\item $U_s\sub G$ is relatively open, non-empty and $\hbox{\rm diam} (U_s) \leq 2^{-length(s)}$,
 	
 	\item $\overline{U_s}\cap \overline{U_t}=\emptyset$ for distinct $s,\ t$ of the same length,
 	
 	\item $\overline{U_{s \hat\ i}}\cap G\sub U_s$,

 	\item $L_s \sub \overline{U_s}$ is compact,
 	
 	\item $L_s\cap \overline{U_{s  \hat\ i}} = \emptyset$,
 	
 	\item $L_s=\bigcap_{j}\overline{\bigcup_{i>j}U_{s\hat\ i}}$,

 	\item  $\lim_i\hbox{\rm diam} (U_{s\hat\ i})=0$.
 \end{itemize} 
 
 If a pair
 $(U_s)_{s\in\sk}$, $(L_s)_{s\in\sk}$
 is a generalized Hurewicz system, then  
 $$P = \bigcap_n \bigcup \{U_s: \hbox{length(s)} = n\}$$
 is the $G_\delta$-subset of $G$ (actually, a copy of the irrationals) {\sl determined by the system} and we have,  cf. \cite[2.4]{p-z-2},
  \begin{enumerate}
 \item[(1)] $\overline{P} = P \cup \bigcup\{L_s: s\in\sk\}.$ 
  \end{enumerate}
  Moreover, 
   \begin{enumerate}
  \item[(2)]  if $V$ is a non-empty relatively open subset of $P$, then $\overline{V}$ contains $L_s$ with arbitrarily  long $s\in\sk$. 
   \end{enumerate}

 We shall use the generalized Hurewicz systems in the following situation.

 Let $I$ be a calibrated  \s-ideal on a compactum $X$ without isolated points, containing all singletons, and let $f:B\w Y$ be a Borel map from $B\in Bor(X)\setminus I$ to a compactum $Y$ without isolated points. 
 
 Moreover, suppose that  $G\sub B$ is a non-empty, $G_{\delta}$-set  in  $X$ such that

  \begin{enumerate}
 \item[(3)] $V\not\in I$ for any non-empty relatively open set $V$ in $G$,
 
 \item[(4)] $f|G: G\w Y$ is continuous.
  \end{enumerate}
 Such a set $G$ can always be found by a theorem of Solecki \cite{s1}.

Given a non-empty relatively open set $U$ in $G$   we shall consider the map $\widecheck{f}_U:Y\w K(\overline{U})$ defined by
  
   \begin{enumerate}
  \item[(5)]
  $\widecheck{f}_U(y)=\bigcap_n \overline{f^{-1}(B(y,\frac{1}{n}))\cap U}$,
   \end{enumerate}
  $B(y,r)$ being the open $r$-ball centered at $y$, with respect to a fixed metric on $Y$. 
  
  In other words, $x \in\widecheck{f}_U(y)$ if and only if there is a sequence $(x_n)$ of elements of $U$ such that $\lim_n x_n=x$ and $\lim_n f(x_n)=y$. 
  
  Notice that for $y\not\in \overline{f(U)}$, $\widecheck{f}_U(y)=\emptyset$. The map $\widecheck{f}_U$ is upper-semicontinuous so, in particular, the set $\widecheck{f}_U[E]$ defined by
  $$
  \widecheck{f}_U[E]=\bigcup\{\widecheck{f}_U(y): y\in E\}
  $$
  is compact, whenever $E$ is compact. Let us also notice that $x\in \widecheck{f}_U(f(x))$ for any $x\in U$ and hence, $\widecheck{f}_U[Y]$ being compact,
  
   \begin{enumerate}
  \item[(6)] $\overline{U}=\widecheck{f}_U[Y].$
  \end{enumerate}

  Functions $\widecheck{f}_U$, associated with  $f$, $G$ fixed and $U$ varying over non-empty open subsets of $G$, will be used to define  generalized Hurewicz systems 
  providing some  control simultaneously over  sets determined by the systems and their images  under $f$. 
  This is explained by  the following lemma where we gathered some observations vital for the proof of Theorem \ref{main}.

 \begin{lemma}\label{main lemma}
  Assume that $I$ is a calibrated  \s-ideal on a compactum $X$ without isolated points, containing all singletons, and let $f:B\w Y$ be a Borel map from $B\in Bor(X)\setminus I$ to a compactum $Y$ without isolated points. Moreover, suppose that  $G\sub B$ is a non-empty, $G_{\delta}$-set  in  $X$ satisfying conditions (3) and (4).
  	
 	\smallskip 
 	
 {\bf (A)}	 Let $U$ be a non-empty relatively open set in $G$ and assume that 
 	  $L\sub \overline{U}$ and $M\sub\overline{f(U)}$ are compacta such that
 	 
 	 	\begin{enumerate}
 	 
 	 \item[(A1)] $L$ is boundary in $\overline{U}$ and $L\notin I$,
 	 
 	 \item[(A2)]  $f^{-1}(M)\in I$,

 	\item[(A3)] $L\sub \widecheck{f}_U[M]$.
 	 
 	 	\end{enumerate}	
 
 Then there exist   
  nonempty relatively open subsets $V_i$ of $U$ such that
 	\begin{enumerate}
 		\item[(A4)]
 		$\overline{V_i}\cap G\sub U$, 
 		
 		\item[(A5)]
 		$\overline{V_i}$ are pairwise disjoint and disjoint from $L$,
 		
 		\item[(A6)] $L=\bigcap\limits_{n}\overline{\bigcup\limits_{i\geq n}V_i}$,  
 		
 		\item[(A7)] 
 		$\overline{f(V_i)}$ are pairwise disjoint and disjoint from $M$,

 		\item[(A8)] 
 		$\lim\limits_{i\to\infty}\hbox{\rm diam} (V_i )=0$ and $\lim\limits_{i\to\infty}\hbox{\rm diam} (f(V_i))=0$ with respect to fixed metrics on $X$ and $Y$, respectively,

 		\item[(A9)] $\bigcap\limits_{n}\overline{\bigcup\limits_{i\geq n}f(V_i)}\sub M$.

 	\end{enumerate}

 	\smallskip 
 	
 	 {\bf (B)}   Let   $(J_s)_{s\in\sk}$ be a family of hereditary collections of closed subsets of $Y$.
 	Assume that for every non-empty relatively open set  $U$  in $G$ and each $s$, there exist compacta $L$ and $M\in J_s$ with  properties  (A1)--(A3). Then 
 there exists a generalized Hurewicz system $(U_s)_{s\in\sk}$, $(L_s)_{s\in\sk}$  with an associated family  $(M_s)_{s\in\sk}$ such that the following additional conditions are satisfied for each $s\in\sk$:
 	
 	\begin{enumerate}
 		
 		\item[(B1)]
 		$L_s\notin I$,

 		\item[(B2)] $M_s\in K(\overline{f(U_s)})\cap J_s$,

 		\item[(B3)]
 		$\overline{f(U_{s\smallfrown i})}$ are pairwise disjoint and disjoint from $M_s$,

 		\item[(B4)]
 		$\lim\limits_{i\to\infty}\hbox{\rm diam} (U_{s\smallfrown i} )=0$ and $\lim\limits_{i\to\infty}\hbox{\rm diam} (f(U_{s\smallfrown i}))=0$ with respect to fixed metrics on $X$ and $Y$, respectively,
 		
 			\item[(B5)]
 			$M_s= \bigcap\limits_{n}\overline{\bigcup\limits_{i\geq n}f(U_{s\smallfrown i})}$,

 	\end{enumerate}

 	 {\bf (C)} If $P\sub G$  is
 	 the  set  determined by the system from part (B), then
 	 \begin{enumerate}
 	 	
 	 	\item[(C1)] $\overline{P} = P \cup \bigcup\{L_s: s\in\sk\}$,
 	 	
 	 	\item[(C2)]  each non-empty relatively open subset of $\overline{P}$ contains some $L_s$ (with arbitrarily  long $s\in\sk$), 
 	 	
 	 	\item[(C3)] $P\notin I$,
 	 	
 	 	\item[(C4)]  $\overline{f(P)} = f(P)\cup \bigcup\{ M_s: s\in\sk\}$,
 	 	
 	 	\item[(C5)]  each non-empty relatively open subset of $\overline{f(P)}$ contains some $M_s$ (with arbitrarily  long $s\in\sk$).

 	 \end{enumerate}

 \end{lemma}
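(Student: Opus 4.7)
My approach to the main technical work, part (A), is a diagonal construction of the $V_i$'s. First fix a countable dense sequence $(l_k)$ in $L$; by hypothesis (A3), for each $l_k$ there is $y_k\in M$ with $l_k\in\widecheck{f}_U(y_k)$, and the defining formula (5) then supplies sequences in $U$ converging to $l_k$ whose $f$-values converge to $y_k$. The boundary condition (A1) plus $f^{-1}(M)\cap U\in I$ (against $U\notin I$ by (3)) let me refine these sequences to lie in $U\setminus(L\cup f^{-1}(M))$. I would then build the $V_i$'s one at a time via a diagonal enumeration of pairs $(k,n)$: each new $V_i$ is a small open ball around such an approximating point, of $X$- and $Y$-diameter at most $1/n$, with $\overline{V_i}$ disjoint from $L$ and from all previously chosen $\overline{V_j}$'s, and with $\overline{f(V_i)}$ disjoint from $M$ and from the previously chosen $\overline{f(V_j)}$'s. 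The key enabling tool is the upper semi-continuity of $\widecheck{f}_U$: a small neighbourhood of $y_k$ in $Y$ pulls back to a Hausdorff-small neighbourhood of $\widecheck{f}_U(y_k)\ni l_k$ in $\overline{U}$, so the required placement of $V_i$ near $L$ with $f(V_i)$ near $M$ is always achievable. Conditions (A4)--(A8) are immediate from the construction, and (A9) follows from the diameter control. This is the main obstacle: the tension between approximating $L$ and $M$ (conditions (A6), (A9)) while keeping $\overline{V_i}$ off $L$ and $\overline{f(V_i)}$ off $M$ (parts of (A5) and (A7)) demands simultaneous control in both coordinates, which rests on the upper semi-continuity of $\widecheck{f}_U$ combined with the hypothesis $L\subseteq\widecheck{f}_U[M]$.

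Part (B) is then a routine induction along the tree $\sk$. Start with $U_\emptyset$, a non-empty relatively open subset of $G$ of $X$-diameter at most $1$. Given $U_s$, the hypothesis of (B) supplies compacta $L\subseteq\overline{U_s}$ and $M\in J_s$ satisfying (A1)--(A3); set $L_s:=L$, $M_s:=M$, and apply part (A) to $(U,L,M)=(U_s,L_s,M_s)$, declaring $U_{s\smallfrown i}:=V_i$. All Hurewicz-system properties and (B1)--(B5) transcribe directly from (A4)--(A9): the defining formula for $L_s$ is exactly (A6), and (B5) comes from the construction of $\overline{f(V_i)}$ accumulating on $M$ together with (A9). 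The boundary of $L_{s\smallfrown i}$ in $\overline{U_{s\smallfrown i}}$, needed at the next inductive step, comes automatically since $L_{s\smallfrown i}$ is disjoint from each $\overline{U_{s\smallfrown i\smallfrown j}}$ and arises only as the ``limit at infinity'' of those sets.

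Part (C) follows from the structure produced in (B). (C1) and (C2) are just the formulas (1) and (2) from the preliminaries. For (C3), suppose $P\in I$ and write $P\subseteq\bigcup_n K_n$ with $K_n\in I\cap K(X)$; by (C2), every non-empty relatively open subset of $\overline{P}$ contains some $L_s\notin I$ (by (B1)), which cannot lie in any $K_n\in I$, so each $K_n\cap\overline{P}$ is nowhere dense in the compactum $\overline{P}$. Hence $\bigcup_n K_n$ is meager in $\overline{P}$, while $P$ is a dense $G_\delta$ in $\overline{P}$ and so comeager by the Baire category theorem---a contradiction. For (C4), the inclusion $\supseteq$ holds because, given $y\in M_s$, (B5) yields $y_n=f(x_n)$ with $x_n\in U_{s\smallfrown i_n}$, $i_n\to\infty$, $y_n\to y$; any $p_n\in P\cap U_{s\smallfrown i_n}$ (non-empty by the tree structure) then satisfies $|f(p_n)-y_n|\leq\mathrm{diam}(f(U_{s\smallfrown i_n}))\to 0$ by (B4), so $f(p_n)\to y$. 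The inclusion $\subseteq$ is by cluster-point analysis: for $y=\lim f(p_k)$ with $p_k\in P$, a cluster point of $(p_k)$ lies either in $P\subseteq G$ (and then $y\in f(P)$ by continuity of $f|G$) or in some $L_s$, in which case a subsequence satisfies $p_{k_j}\in U_{s\smallfrown i_j}$ with $i_j\to\infty$, forcing $y\in M_s$ by (B5). Finally (C5) is the image-side analogue of (C2): given non-empty open $V\subseteq\overline{f(P)}$ and $f(p)\in V$ with $p\in P$, continuity of $f$ at $p\in G$ together with $\mathrm{diam}(U_t)\leq 2^{-|t|}\to 0$ along $p$'s branch allows us to pick $t$ long enough that $\overline{f(U_t)}\subseteq V$, and then $M_t\subseteq\overline{f(U_t)}\subseteq V$.
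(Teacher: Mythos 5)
Your overall route is the same as the paper's: a single inductive construction for (A) driven by a countable dense subset of $L$, the relation $l_k\in\widecheck{f}_U(y_k)$ for some $y_k\in M$ to locate points of $U$ simultaneously close to $L$ and with $f$-image close to $M$, the boundary hypothesis (A1) together with (3) and (A2) to dodge $L\cup f^{-1}(M)$, then iteration along $\sk$ for (B) and the standard Baire-category and cluster-point arguments for (C). (One remark: what you actually use is not the upper semi-continuity of $\widecheck{f}_U$ but simply its definition, which directly provides points of $U$ near $l_k$ whose images are near $y_k$; u.s.c.\ gives information in the opposite direction.)

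There is, however, one concrete gap in part (B). You keep $M_s:=M$ and justify the equality in (B5) by asserting that the sets $\overline{f(U_{s\smallfrown i})}$ ``accumulate on $M$''. The construction does not deliver this: the images $f(V_i)$ cluster only at the particular points $y_k\in M$ chosen for the dense sequence $(l_k)$ in $L$, and (A3) gives no reason for these to be dense in $M$ (e.g.\ when $M=\{y:\widecheck{f}_U(y)\cap L\neq\emptyset\}$, a given $m\in M$ may never be selected). So the construction only yields the inclusion (A9), and with $M_s=M$ the equality (B5) can fail. The repair is to replace $M_s$ by $\bigcap_{n}\overline{\bigcup_{i\geq n}f(U_{s\smallfrown i})}$, a closed subset of $M$; this is exactly where the hypothesis that each $J_s$ is \emph{hereditary} enters, and it preserves (B2) and (B3). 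Your argument never uses heredity, which is a sign the step is missing. A second, purely bookkeeping issue: if the $i$-th set $V_i$ corresponds to the pair $(k,n)$ under a diagonal enumeration and is only required to have diameter $\leq 1/n$ and to lie within $1/n$ of $l_k$, then infinitely many $V_i$ carry a fixed small $n$, and both (A8) and the inclusion $\bigcap_{n}\overline{\bigcup_{i\geq n}V_i}\subseteq L$ in (A6) can fail; you should bound the diameters and the distances to $L$ and $M$ by $1/(i+1)$ at stage $i$, as the repetition of each $l_k$ infinitely often already secures the reverse inclusion.
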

 
\begin{proof}
  
 In order to prove part (A), 
 let us fix a countable dense set in $L$ and list its elements, repeating each point infinitely many times, as $a_0,a_1,\ldots$.
 
We shall choose inductively non-empty relatively open sets $V_i$ in $U$ such that, 
 
 \smallskip 
 
 \begin{enumerate}

 \item[(7)] $V_i\sub B(a_i,\frac{1}{i+1}),\ \overline{V_i}\cap G\sub U,\ diamf(V_i)\leq \frac{1}{i+1},\ f(V_i)\sub B(M,\frac{1}{i+1}),$

 
\item[(8)] $\overline{V_i}\sub \overline{U}\setminus 
 (L\cup \bigcup_{j<i}\overline{V_j}),\ 
 \overline{f(V_i)}\sub \overline{f(U)}\setminus 
 (M\cup \bigcup_{j<i}\overline{f(V_j)})$.
\end{enumerate}
 
 Suppose that $V_j$, $j<i$, are already defined, where $V_0=\emptyset$.
 
 Since $a_i\in L$, by (A3) we have  $a_i \in \widecheck{f}_U(b_i)$ for some $b_i\in M$. Let $\delta_i < \frac{1}{i+1}$ be such that 
 \begin{enumerate}
 
 \item[(9)] $B(a_i,\delta_i)\cap \bigcup_{j<i}\overline{V_j}=\emptyset,\ B(b_i,\delta_i)\cap \bigcup_{j<i}\overline{f(V_j)}=\emptyset$.
 
 \end{enumerate}
 
 From (5),
 \begin{enumerate}
 \item[(10)] $W= B(a_i,\delta_i)\cap f^{-1}(B(b_i,\delta_i))\cap U\neq\emptyset$,
\end{enumerate}
 and by (4), $W$ is relatively open in $U$. Since $L$ is boundary in $\overline{U}$, $W\setminus L\neq\emptyset$ and by (3), $W\setminus L\notin I$. Since, cf. (A2), $f^{-1}(M)\in I$, we can pick $c\in W\setminus (L\cup f^{-1}(M))$. Then $f(c)\in B(b_i,\delta_i)\setminus M$, cf. (10), and appealing again to continuity of $f$, we get a relatively open neighbourhood $V_i$ of $c$ in $U$ with $\overline{V_i}\sub B(a_i,\delta_i)\setminus L$, $\overline{V_i}\cap G\sub U$ and  $\overline{f(V_i)}\sub B(b_i,\delta_i)\setminus M$. By (9), $V_i$ satisfies (8).
 
 This completes the inductive construction. 
 It is now easy to see that requirements (A4)--(A9) of part (A) are met.
 
 \smallskip 
 
 Having  checked part (A), we can use it subsequently to define inductively a generalized Hurewicz system in $X$ with properties (B1)--(B4) and property (B5) replaced by, cf. (A9),
 $$
  	 \bigcap\limits_{n}\overline{\bigcup\limits_{i\geq n}f(U_{s\smallfrown i})}\sub M_s.
  	 $$
  Taking into account that $J_s$ is hereditary,  to secure (B5), it suffices to replace $M_s$ by
  $\bigcap\limits_{n}\overline{\bigcup\limits_{i\geq n}f(U_{s\smallfrown i})}$.
 This completes the proof of part (B). 	 
 
 \smallskip 
 
 To prove part (C), first note that properties (C1), (C2) hold for any generalized Hurewicz system considered in this paper and (C3) follows from (B1) and (C2) by a Baire category argument.

 We proceed to the proof of (C4). The inclusion
 $$
 f(P) \cup \bigcup\{ M_s: s\in\sk\}\sub\overline{f(P)}
 $$ 
 can be easily justified with the help of (B4) and (B5) combined with the observation that 
 $U_s\cap P\neq \emptyset$ for each $s$.
 
 To prove the opposite inclusion, first note that, by (B5), for each $s$ we have:
 $$
 \overline{f(P\cap U_s)}=\overline{\bigcup_i f(P\cap U_{s\smallfrown i})}\sub \bigcup_i\overline{f(P\cap U_{s\smallfrown i})}\cup M_s.
 $$
 
 So assume that $y\in \overline{f(P)}\setminus \bigcup \{ M_s: s\in\sk\}$ and notice  that for each $s$, if $y\in \overline{f(P\cap U_s)}$, then there is (precisely one, cf. (B3)) $i$ such that  $y\in \overline{f(P\cap U_{s\smallfrown i})}$. Using the fact that $y\in \overline{f(P\cap U_{\emptyset})}$ (recall that $P\sub U_{\emptyset}$) this allows us to construct inductively a sequence 
 $z\in \baire$ with
 $$
 y\in \overline{f(P\cap U_{z|n})}\ \mbox{for each}\ n\in\N.
 $$
 
 It follows, by the continuity of $f$,   that if $x\in P$ is the unique element of  $\bigcap_n U_{z|n}$, then $y=f(x)$, which shows  that $y\in f(P)$ completing the proof of (C4).
 \smallskip 
 
 Finally, (C5) can be easily justified with the help of (B5) and (B4).

 \end{proof}

\section{Proof of Theorem \ref{main} \hbox{\rm (i)}}\label{proof of (i)}

Striving for a contradiction, let us assume that for any meager set $C$ in $Y$, $f^{-1}(C)\in I$. In particular, since $Y$ has no isolated points, it follows that $f^{-1}(y)\in I$ for any $y\in Y$.

Using a theorem of Solecki \cite{s1}, we can find a non-empty $G_{\delta}$-set $G$ in $X$ such that $G\sub B$,
 \begin{enumerate}
\item[(1)] $V\notin I$ for any non-empty relatively open $V$ in $G$, 

\item[(2)] $f|G: G\w Y$ is continuous.
 \end{enumerate}


We shall apply Lemma \ref{main lemma}, 
and to that end, we shall first establish the following fact.

\ms

\begin{claim}\label{claim for (i)}

 Let $U$ be a  non-empty relatively open set in $G$. Then there exist   compacta   $L\sub \overline{U}$ and $M\sub\overline{f(U)}$ such that
 
 (3) $L$ is boundary in $\overline{U}$ and $L\notin I$,
 
 (4) $M$ is boundary in 
  $\overline{f(U)}$,

 (5) $L\sub \widecheck{f}_U[M]$.
 
 \end{claim}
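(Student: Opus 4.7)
The plan is to take $L:=\widecheck{f}_U[M]$ for a carefully chosen compact boundary set $M\subseteq\overline{f(U)}$. I first verify two ``automatic'' features of this choice. \emph{Automatic 1}: $\overline{f(U)}$ must have nonempty interior in $Y$. Otherwise it is closed and nowhere dense in $Y$, hence meager in $Y$, so the contradiction hypothesis forces $f^{-1}(\overline{f(U)})\in I$, contradicting $U\subseteq f^{-1}(\overline{f(U)})$ together with $U\notin I$ (from (1)). Consequently any closed boundary set $M\subseteq\overline{f(U)}$ has empty interior in $Y$ too (an open $Y$-neighbourhood inside $M\subseteq\overline{f(U)}$ would be open in $\overline{f(U)}$), so $M$ is nowhere dense and closed in $Y$, hence meager, giving $f^{-1}(M)\in I$. \emph{Automatic 2}: for any such $M$, $\widecheck{f}_U[M]$ is automatically boundary in $\overline{U}$. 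Indeed, if its interior $V$ in $\overline{U}$ were nonempty, write $V=\overline{U}\cap W$ with $W$ open in $X$; then $V\cap U=U\cap W$ is nonempty (by density of $U$ in $\overline{U}$) and relatively open in $G$, so $V\cap U\notin I$ by (1). But $V\cap U\subseteq\widecheck{f}_U[M]\cap U=f^{-1}(M)\cap U\in I$, a contradiction.

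It therefore suffices to produce a compact boundary $M\subseteq\overline{f(U)}$ with $\widecheck{f}_U[M]\notin I$. Fix a countable dense set $D=\{y_n\}_{n\in\mathbb{N}}$ in $\overline{f(U)}$. By the contradiction hypothesis $f^{-1}(y_n)\in I$, and since $I$ is generated by compact sets, we cover $f^{-1}(y_n)\subseteq\bigcup_{k}K_{n,k}$ with $K_{n,k}\in I\cap K(X)$. Invoking the standard inner regularity for calibrated, compactly generated \s-ideals (via Solecki and Kechris--Louveau--Woodin), the Borel set $U\notin I$ contains a compactum $K\subseteq U$ with $K\notin I$. Calibration applied to $K$ together with the avoidance family $\{K_{n,k}\}$ then yields a compactum $L_{0}\subseteq K\setminus\bigcup_{n,k}K_{n,k}$ with $L_{0}\notin I$, so $L_{0}\subseteq U$ and $L_{0}\cap f^{-1}(y_n)=\emptyset$ for every $n$. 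Set $M:=f(L_{0})$, a compact subset of $f(U)\subseteq\overline{f(U)}$ by continuity of $f|U$. Since $M\cap D=\emptyset$ while $D$ is dense in $\overline{f(U)}$, the closed set $M$ has empty interior in $\overline{f(U)}$, i.e.\ is boundary there.

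Finally, set $L:=\widecheck{f}_U[M]$; this is compact by upper-semicontinuity of $\widecheck{f}_U$, and it contains $L_{0}$, because every $x\in L_{0}\subseteq U$ satisfies $x\in\widecheck{f}_U(f(x))$ with $f(x)\in M$. Hence $L\notin I$, and Automatic 2 gives that $L$ is boundary in $\overline{U}$, so (3), (4) and (5) all hold. I expect the main technical point to be the inner regularity step that supplies a compactum $K\subseteq U$ with $K\notin I$; once this is granted, everything else is routine bookkeeping with calibration and the upper-semicontinuity of the cluster-set map $\widecheck{f}_U$.
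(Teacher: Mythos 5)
Your argument hinges on the step you yourself single out as the main technical point: that the Borel set $U\notin I$ contains a compactum $K\sub U$ with $K\notin I$. This is not a ``standard inner regularity'' consequence of calibration; it is the covering property in the sense of Kechris--Louveau--Woodin, which is strictly stronger than calibration and fails for $\sigma$-ideals that are central to this paper. For example, the $\sigma$-ideal $J_{\sigma}(\Haus^1)$ of Borel subsets of $[0,1]^2$ coverable by countably many compacta of $\sigma$-finite $\Haus^1$-measure is calibrated (cf.\ Section \ref{calibrated}) and contains all singletons, and its members are meager (compacta of $\sigma$-finite measure have empty interior); yet a dense $G_\delta$ set $H\sub[0,1]^2$ with $\Haus^1(H)=0$ is non-meager, hence $H\notin J_{\sigma}(\Haus^1)$, while every compact subset of $H$ is $\Haus^1$-null and so belongs to the ideal. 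Closer to the situation at hand, for $I=J_0(\mu)$ the Solecki set $G$ can have $\mu(G)=0$ (this is exactly how $G'$ is chosen in Section \ref{the PO's}), and then no compact subset of any $U\sub G$ lies outside $I$, although every nonempty relatively open $U\sub G$ is outside $I$. Solecki's theorem produces $G_\delta$ sets outside $I$, never compacta, and the contradiction hypothesis of Section \ref{proof of (i)} (preimages of meager sets are in $I$) gives no route to such a $K$; consequently $L_0$ and $M=f(L_0)$ need not exist, and the construction collapses at its first nontrivial step.

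This is precisely the obstacle the paper's proof is designed to circumvent: rather than seeking non-small compacta inside $U$, it works with the compact values $\widecheck{f}_U(d)\sub\overline{U}$ of the cluster map for $d$ in a countable dense subset $D$ of $\overline{f(U)}$. If some $\widecheck{f}_U(d)\notin I$, calibration applied against singletons yields a boundary compactum $L\sub\widecheck{f}_U(d)$ outside $I$ and one takes $M=\{d\}$; if $\widecheck{f}_U(d)\in I$ for all $d\in D$, calibration applied to $\overline{U}$ against this countable family yields a boundary compactum $L\sub\overline{U}\setminus\bigcup_{d\in D}\widecheck{f}_U(d)$ outside $I$, and then $M=\{y\in Y:\widecheck{f}_U(y)\cap L\neq\emptyset\}$ works, using $\overline{U}=\widecheck{f}_U[Y]$ and $M\cap D=\emptyset$. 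Your two preliminary observations are essentially sound --- in particular your ``Automatic 2'' correctly shows via $\widecheck{f}_U[M]\cap U=f^{-1}(M)\cap U$ that $\widecheck{f}_U[M]$ is boundary in $\overline{U}$ once $f^{-1}(M)\in I$ (and ``Automatic 1'' is not even needed for that) --- but the existence of a compactum $K\sub U$ outside $I$ cannot be repaired along the lines you propose, so the proof has a genuine gap.
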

 To prove the claim,
 first note  that $\overline{f(U)}$ has no isolated points. For suppose that $y$ is an isolated point in $\overline{f(U)}$. Then, by the continuity of $f$, $f^{-1}(y)$ contains a non-empty relatively open subset of $G$ which, by (1), implies that $f^{-1}(y)\notin I$, contradicting our assumptions.
 
 \smallskip 
 
 Now let us fix a countable set $D$ dense in 
 	 $\overline{f(U)}$. 
 	We shall consider two cases.
 	
 	\ms
 	
 	{\sl Case 1. There exists $d\in D$ with $\widecheck{f}_U(d)\notin I$.}
 	
 	\ms
 	
 	Then, since all singletons of $\widecheck{f}_U(d)$ are in $I$ and $I$ is calibrated, there exists a boundary in $\overline{U}$ compactum $L\sub \widecheck{f}_U(d)$ not in $I$, and we let $M=\{d\}$.
 	
 	\ms
 	
 	{\sl Case 2. For all $d\in D$, $\widecheck{f}_U(d)\in I$.}
 	
 	\ms
 	
 	Then, $I$ being calibrated and containing all singletons of $X$, we have a boundary compactum 
 	$L\sub \overline{U}\setminus \bigcup_{d\in D}\widecheck{f}_U(d)$, $L\notin I$. Let
 	$$
 	M=\{y\in Y: \widecheck{f}_U(y)\cap L\neq \emptyset \}.
 	$$
 	
 	The compactum $M\sub \overline{f(U)}$ is disjoint from $D$, hence boundary in 
 	 $\overline{f(U)}$,
 	 and we have, cf.  (6) in Section \ref{sec:2}, $L\sub \widecheck{f}_U[M]$, which completes the proof of the claim.
 	
\ms

 Having verified the claim, we shall modify the proof of Lemma \ref{main lemma} to get for any non-empty relatively open set $U$ in $G$  a sequence $(V_i)$ of non-empty relatively open subsets  of $U$ with properties (A4)--(A9)  and the following additional property
	\begin{enumerate}
	
	




\item[(6)]
$M\sub \overline{\overline{f(U)}\setminus\overline{\bigcup\limits_{i}f(V_i)}}$, 
\end{enumerate}


 Namely, since  $M$ is boundary in $\overline{f(U)}$ and 
 $\overline{f(U)}$
  has no isolated points,  we can enlarge $M$ 
to a compactum $M^*\sub \overline{f(U)}$
such that
\begin{enumerate}
\item[(7)] $M^*$ is boundary in 
$\overline{f(U)}$
 and $M\sub \overline{M^*\setminus M}$.
 \end{enumerate}

To get $M^*$, we fix a countable set $C$ dense in $M$, and then we pick subsequently points $d_n$ in 
$\overline{f(U)}\setminus M$ so that $d_n\in B(M,\frac{1}{n+1})$ and each point in $C$ is the limit of a subsequence of $(d_n)_{n\in\N}$. Then we let $M^*= M\cup \{d_n:n\in\N \}$.
\smallskip 

Having defined $M^*$ satisfying (7),  we proceed as in the proof of part (A) of Lemma \ref{main lemma} 
and using the fact that  our assumptions yield $f^{-1}(M^*)\in I$ we can choose inductively non-empty relatively
 open sets $V_i$ in $U$ such that
\begin{enumerate}
\item[(8)] $V_i\sub B(a_i,\frac{1}{i+1}),\ \overline{V_i}\cap G\sub U,\ diamf(V_i)\leq \frac{1}{i+1},\ f(V_i)\sub B(M,\frac{1}{i+1}),$

\item[(9)] $\overline{V_i}\sub \overline{U}\setminus 
(L\cup \bigcup_{j<i}\overline{V_j}),\ 
\overline{f(V_i)}\sub \overline{f(U)}\setminus 
(M^*\cup \bigcup_{j<i}\overline{f(V_j)})$.
\end{enumerate}

Then requirements (A4)--(A9)  of Lemma \ref{main lemma} are still met. In particular, cf. (8),  $\bigcap\limits_{n}\overline{\bigcup\limits_{i\geq n}f(V_i)}\sub M$ which, combined with (9), guarantees that 
$\overline{\bigcup_i f(V_i)}$  is disjoint from $M^*\setminus M$. However, by (7), the latter set 
contains $M$ in its closure which justifies (6).

\ms

We can now define a generalized Hurewicz system $(U_s)_{s\in\sk}$, $(L_s)_{s\in\sk}$  with the associated family  $(M_s)_{s\in\sk}$
satisfying  for each $s\in\sk$ conditions (B1)--(B5) (with $J_s$ being the \replaced{the collection of meager compacta }{\s-ideal of meager sets} in $Y$, see part (B) of Lemma \ref{main lemma}) and the following additional condition

\begin{enumerate}
	
	
	


\item[(10)]
	$M_s\sub \overline{\overline{f(U_s)}\setminus\overline{\bigcup\limits_{i}f(U_{s\smallfrown i})}}$.

\end{enumerate}

These conditions guarantee that the set $P$  determined by this system not only has  properties (C1)--(C5) (see part (C) of Lemma \ref{main lemma}) but satisfies the following one as well

\begin{enumerate}
	
\item[(11)] \mbox{For each $s$, }
$M_s\sub \overline{Y\setminus\overline{f(P)}}$. 
\end{enumerate}
\smallskip 

To see this,
it suffices to prove,  by (10) and (C4), that for each $s$
$$
(\overline{f(U_s)}\setminus\overline{\bigcup\limits_{i}f(U_{s\smallfrown i})})
\cap( f(P) \cup \bigcup\{ M_s: s\in\sk\} ) =\emptyset.
$$



So fix $s\in\sk$ and let $y\in \overline{f(U_s)}\setminus\overline{\bigcup\limits_{i}f(U_{s\smallfrown i})}$. 

\added{Striving for a contradiction} suppose first that $y\in f(P)$. Let $k=length(s)$.
Since
$$f(P)\sub \bigcup\{ f(U_t):\ length(t)=k \}$$
where the sets $f(U_t)$ have pairwise disjoint closures, \added{the fact that}  $y\in \overline{f(U_s)}\cap f(P)$ implies that $y\in f(U_s)$.
Consequently, since
$$f(P)\cap f(U_s)\sub f(P\cap U_s) \sub \bigcup_i f(U_{s\smallfrown i}),$$
we conclude that $y\in  \bigcup_i f(U_{s\smallfrown i})$, contrary to the assumption that $y\not\in \overline{\bigcup\limits_{i}f(U_{s\smallfrown i})}$.

\replaced{Next, if $y\in M_t$ for some $t\in\sk$, then a contradiction can be easily reached by considering the four 
	mutual positions of $t$ and $s$ (namely, $s=t$, $s\subsetneq t$, $t\subsetneq s$, $s$ and $t$ are incompatible). }{
Suppose next that $y\in M_t$ for a certain $t\in\sk$ and let us consider the following cases:
		{\sl Case 1}. $s=t$.
	Then $y\in M_s=\bigcap_n\overline{\bigcup_{i\geq n} f(U_{s\smallfrown i}) }$, contrary to the fact that 
	$y\not\in \overline{\bigcup\limits_{i}f(U_{s\smallfrown i})}$.
		{\sl Case 2}. $s\subsetneq t$. 
		Then $y\in M_t\sub \overline{f(U_t)}\sub  \overline{f(U_{t|(k+1)})}$ and $t|(k+1)=s\smallfrown i$ for $i=t(k)$, which leads to a contradiction as in Case 1.
		{\sl Case 3}. $t\subsetneq s$.
		 Then $y\in M_t$ and $M_t \cap \overline{f(U_{t\smallfrown i})}=\emptyset$ for each $i$, contrary to the fact that $y\in\overline{f(U_s)}\sub \overline{f(U_{t\smallfrown i})}$ for $i=s(length(t))$.
	%
			{\sl Case 4}. $s$ and $t$ are incompatible.
		 Then $y\in M_t \sub \overline{f(U_t)}$ and $y\in  \overline{f(U_s)}$, contrary to the fact that $\overline{f(U_t)}\cap \overline{f(U_s)}=\emptyset$.}
	
\smallskip 
Having justified (11), \added{let us} note that combined with \replaced{(C5) }{(C4)} it  implies that $\overline{f(P)}$ has empty interior in $Y$. But on the other hand, $P\notin I$ cf. (C3). In effect, for the meager compactum $C=\overline{f(P)}$ we have $f^{-1}(C)\notin I$ and this contradiction with our assumptions
completes the proof of part (i) of Theorem \ref{main}.~\qed

\section{Proof of Theorem \ref{main} \hbox{\rm (ii)} }\label{proof of (ii)} The reasoning in this case goes along similar lines as for Theorem \ref{main}(i).

Striving for a contradiction, suppose that for any zero-dimensional compactum $C$ in $Y$, $f^{-1}(C)\in I$. 

 The compactum $Y$ being countable-dimensional, $Y$ has defined the small inductive transfinite dimension ${\rm ind}\ Y$, see \cite[Theorem 7.1.9]{E}. Let $Y'$ be a compactum in $Y$ such that $f^{-1}(Y')\not\in I$ with minimal transfinite dimension. Replacing $Y$ by $Y'$ and $B$ by  $f^{-1}(Y')$, we can assume that  $f^{-1}(K)\in I$ for any compactum $K$ in $Y$ with  ${\rm ind}\ K< {\rm ind}\ Y$.

Let us choose a base for the topology of $Y$ whose elements have boundaries $K_0,K_1,\ldots$ with 
 ${\rm ind}\ K_i< {\rm ind}\ Y$. Then
 \begin{enumerate}

 \item[(1)] $f^{-1}(K_i)\in I$ for any $i$ and 
 $H=Y\setminus \bigcup\limits_i K_i$ is zero-dimensional.
\end{enumerate}

 We have $B\setminus \bigcup\limits_i f^{-1}(K_i)\notin I$ and using a theorem of Solecki \cite{s1}, we can find a non-empty $G_{\delta}$-set $G$ in $X$, $G\sub B\setminus \bigcup\limits_i f^{-1}(K_i)$, such that 
$V\notin I$ for any non-empty relatively open $V$ in $G$ and, cf. (1), 
 \begin{enumerate}
\item[(2)] $f|G: G\w H$ is continuous.
\end{enumerate}

A key element of our reasoning is the 
 following counterpart of 
Claim \ref{claim for (i)}.

\ms 

\begin{claim}\label{claimfor(ii)}

	Let $U$ be a  non-empty relatively open set in $G$. Then there exist   compacta   $L\sub \overline{U}$ and $M\sub\overline{f(U)}$ such that
	
\begin{enumerate}
\item[(3)] $L$ is boundary in $\overline{U}$ and $L\notin I$,
	
	\item[(4)] $M$ is zero-dimensional, 

	\item[(5)] $L\sub \widecheck{f}_U[M]$.
\end{enumerate}	
\end{claim}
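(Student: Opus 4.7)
The strategy is to bypass the two-case split used in Claim~\ref{claim for (i)}. The observation is that condition~(4) here requires only zero-dimensionality of $M$ (not its boundarity in $\overline{f(U)}$), and since $U\subset G\subset B\setminus\bigcup_i f^{-1}(K_i)$ by~(1), we have $f(U)\subset H$ with $H$ zero-dimensional. Thus any compactum $M\subset f(U)$ will automatically satisfy~(4). So I would construct a single compact $L\subset U$ realising~(3) together with $L\cap f^{-1}(K_i)=\emptyset$ for every $i$, and then set $M:=f(L)$; then (4) and (5) will fall out for free.

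The construction of $L$ uses calibration. First, by inner regularity (a standard consequence of Solecki's theorem for calibrated \s-ideals generated by compact sets, via the Kechris--Louveau--Woodin dichotomy), pick a compact $K'\subset U$ with $K'\notin I$. Fix a countable dense set $\{x_n:n\in\N\}$ in $K'$; each singleton $\{x_n\}$ is a compact element of $I$ since $I$ contains all singletons. For each~$i$, using $f^{-1}(K_i)\in I$ (from~(1)) and the fact that $I$ is generated by compact sets, cover $f^{-1}(K_i)\subset\bigcup_m K_{i,m}$ by compacta $K_{i,m}\in I$. Now apply calibration of $I$ to $K'\notin I$ against the countable family $\{\{x_n\}:n\in\N\}\cup\{K_{i,m}:i,m\in\N\}$ of compacta in $I$ to extract a compact
\[
L\subset K'\setminus\Bigl(\{x_n:n\in\N\}\cup\bigcup_{i,m}K_{i,m}\Bigr),\qquad L\notin I.
\]

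Verification is short. For~(3): $L\subset K'\subset U\subset\overline{U}$ is compact and not in~$I$; if some nonempty relatively open $V\subset\overline{U}$ were contained in $L$, then $V$ would sit inside $K'$ as a nonempty relatively open subset of $K'$ disjoint from the dense set $\{x_n\}$ --- impossible. So $L$ is boundary in $\overline{U}$. For~(4): $L$ misses $\bigcup_{i,m}K_{i,m}\supset\bigcup_i f^{-1}(K_i)$, so $M:=f(L)$ misses every $K_i$; being a compact subset of $f(U)\subset H$, it is zero-dimensional, and clearly $M\subset\overline{f(U)}$. For~(5): each $x\in L\subset U$ is witnessed by the constant sequence $(x,x,\ldots)$ in $U$ with $f$-image the constant $f(x)\in M$, so $x\in\widecheck{f}_U(f(x))\subset\widecheck{f}_U[M]$. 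The only delicate point is the inner-regularity step producing $K'$; this is the standard companion to the Solecki theorem already invoked in Section~\ref{sec:2}, and should not present any serious technical obstacle.
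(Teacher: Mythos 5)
There is a genuine gap, and it sits exactly at the point you dismiss as "the only delicate point": the inner-regularity step producing a compact $K'\sub U$ with $K'\notin I$. No such compact set need exist. Calibrated \s-ideals generated by compact sets are in general \emph{not} inner regular on $I$-positive Borel (or even $G_\delta$) sets. For a concrete counterexample within the hypotheses of Theorem \ref{main}, take $X=[0,1]$, $I=J_0(\lambda)$ (calibrated, containing all singletons), and let $B$ be a dense $G_\delta$ set with $\lambda(B)=0$. Then $B\notin J_0(\lambda)$, since closed null sets are nowhere dense and hence every member of $J_0(\lambda)$ is meager; yet every compact subset of $B$ is Lebesgue-null and so lies in $J_0(\lambda)$. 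The same is true of the Solecki set $G\sub B$ and of every non-empty relatively open $U\sub G$: all compact subsets of $U$ are in $I$. Solecki's theorem and the Kechris--Louveau--Woodin analysis yield an $I$-positive $G_\delta$ subset, not an $I$-positive compact subset, and in this setting the two are genuinely different.

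This is not a repairable detail but the very reason the claim is nontrivial. The $I$-positive compactum $L$ must in general be sought in $\overline{U}\setminus U$ (indeed in $\overline{U}\setminus G$), where $f$ is not defined, so one cannot set $M=f(L)$; the witness $M$ with $L\sub\widecheck{f}_U[M]$ must be extracted from $\overline{f(U)}$, and only $f(U)\sub H$ --- not $\overline{f(U)}$ --- is guaranteed to be zero-dimensional. The paper's proof confronts precisely this: it applies calibration to the compacta $\widecheck{f}_U[E_i]$, where $E_i=\overline{f(U)}\cap K_i$ (these are compact since $\widecheck{f}_U$ is upper semicontinuous, while $\overline{U}=\widecheck{f}_U[Y]\notin I$). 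If every $\widecheck{f}_U[E_i]$ is in $I$, calibration gives a boundary compactum $L\sub\overline{U}\setminus\bigcup_i\widecheck{f}_U[E_i]$ not in $I$, and then $M=\{y\in\overline{f(U)}:\widecheck{f}_U(y)\cap L\neq\emptyset\}$ is forced into $H$, hence is zero-dimensional. If some $\widecheck{f}_U[E_i]\notin I$, one must recurse into $E_i$ via a minimal-transfinite-dimension argument to produce a zero-dimensional $M$ inside $E_i$. Your argument never engages with the part of $\overline{U}$ lying over $\bigcup_i K_i$, which is where the whole difficulty lives.
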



In order to prove the claim,  let, cf. (1),
\begin{enumerate}
\item[(6)] $Z=\overline{f(U)}$, $E_i=Z\cap K_i$
\end{enumerate}
and consider two cases.

\ms

{\sl Case 1. There exists $i$ such that $\widecheck{f}_U[E_i]\notin I$.}

\smallskip 

Then, let $S$ be a compactum in $E_i$ such that  $\widecheck{f}_U[S]\notin I$ with minimal possible transfinite dimension ${\rm ind}$.

Considering, as we did before, a base in $S$ whose elements have boundaries $S_0,S_1,\ldots$ with 
 ${\rm ind}\ S_i< {\rm ind}\ S$, we have that $\widecheck{f}_U[S_i]\in I$ and 
 $T=S\setminus \bigcup\limits_i S_i$ is zero-dimensional. Since $I$ is calibrated and $\widecheck{f}_U[S]\notin I$, there is a compactum 
 $L\sub \widecheck{f}_U[S]\setminus \bigcup\limits_i \widecheck{f}_U[S_i]$ not in $I$ and (since the sigletons of $X$ belong to $I$), we may demand that $L$ is boundary in $\overline{U}$.
 Then $M=\{y\in S: \widecheck{f}_U(y)\cap L\neq \emptyset \}$
is a compact subset of $T$, hence zero-dimensional, and $L\sub \widecheck{f}_U[M]$.
\ms

{\sl Case 2. For all $i$,  $\widecheck{f}_U[E_i]\in I$.}

\smallskip 

Then, as in Case 1, we can pick a boundary in 
$\overline{U}$ compactum  $L\sub \overline{U}\setminus \bigcup\limits_i \widecheck{f}_U[E_i]$ not in $I$ and since the compactum  $M=\{y\in Z: \widecheck{f}_U(y)\cap L\neq \emptyset \}$
is contained in $H$, cf. (1) and (6), $M$ is zero-dimensional and $L\sub \widecheck{f}_U[M]$.

\ms

Having justified the claim, we can use Lemma \ref{main lemma} to define a generalized Hurewicz system $(U_s)_{s\in\sk}$, $(L_s)_{s\in\sk}$  with the associated family  $(M_s)_{s\in\sk}$ 
satisfying  for each $s\in\sk$ conditions (B1)--(B5) with  $J_s$ being the collection of zero-dimensional compacta in $Y$, see part (B) of Lemma \ref{main lemma}.

These conditions guarantee that the set $P$  determined by this system apart from  properties (C1)--(C5) (which follow from part (C) of Lemma \ref{main lemma}) satisfies also the following one

\begin{enumerate}
\item[(8)] The compactum $\overline{f(P)}$ contains no non-trivial continuum.
\end{enumerate}



To prove this, let us first show that if $a\in f(P)$ and $b\in \overline{f(P)}\setminus f(P)$, then there exists a clopen in $\overline{f(P)}$ set containing $a$ and missing $b$.

Indeed, for some $n\in\N$ there are sequences $s\in \N^n$, $t\in\N^{n+1}$ with  $b\in M_s$  and  $a\in \overline{f(U_t)}$. Let $V=\overline{f(U_t)}\cap \overline{f(P)}$.

Clearly, $V$ is closed in $\overline{f(P)}$. To see that it is also open in $\overline{f(P)}$, let $(x_i)_{i\in\N}$ be a convergent sequence of elements in $\overline{f(P)}\setminus V$ with $x=\lim\limits_{i\to\infty}x_i$. Let $k$ be the smallest natural number (possibly 0 but clearly not grater than $n$)  such that 
$$
x_i\in \bigcup_{j\neq t(k)}\overline{f(U_{t|k\smallfrown j})}
$$
for all but finitely many $i\in\N$ (here $t|k$ denotes $t|\{j\in\N: j<k \}$, in particular $t|0$ is the empty sequence).

 It follows that $x\in \bigcup\limits_{j\neq t(k)}\overline{f(U_{t|k\smallfrown j})}\cup M_{t|k}$, cf. (B5), and the latter set being disjoint from $\overline{f(U_t)}$, we conclude that $x\notin V$.

Thus $V$ is indeed clopen in $\overline{f(P)}$, $a\in V$ and $b\not\in V$, since $M_s\cap \overline{f(U_t)}=\emptyset$. 

Now, if $C$ is any continuum in $\overline{f(P)}$, the preceding observation shows that either $C\sub f(P)$ or $C\sub \bigcup\{ M_s: s\in\sk\}=M$, cf. (C4). Since $f(P)$ is a copy of the irrationals, hence zero-dimensional, and so is $M$, being the countable union 
of closed zero dimensional sets $M_s$, cf. \cite[Theorem 1.3.1]{E}, in both cases, $C$ must be a singleton.

\smallskip 

Having justified (8), we conclude that $\overline{f(P)}$  is zero-dimensional, cf. \cite[Theorem 1.4.5]{E}.  On the other hand, $P\notin I$, cf. (C3). In effect, for the zero-dimensional compactum $C=\overline{f(P)}$ we have $f^{-1}(C)\notin I$ and this contradiction with our assumptions
completes the proof of  part (ii) of Theorem \ref{main}.~\qed

\section{Proof of Theorem \ref{main} \hbox{\rm (iii)}}\label{proof of (iii)} Again the scheme of the proof
is analogous to the ones in preceding sections.

Striving for a contradiction, suppose that  $f^{-1}(C)\in I$ for any  compactum $C$ in $Y$ with $\mu(C)<\infty$. 


Since the measure $\mu$ is $\s$-finite, there are compact sets $F_i$ in $Y$ with $\mu(F_i)<\infty$ and such that if we let
$H=Y\setminus \bigcup\limits_i F_i$, then
\begin{enumerate}
	\item[(1)] $\mu(H)=0$. 
\end{enumerate}
 
 We have $B\setminus \bigcup\limits_i f^{-1}(F_i)\notin I$ and using a theorem of Solecki \cite{s1}, we can find a non-empty $G_{\delta}$-set $G$ in $X$, $G\sub B \setminus \bigcup\limits_i f^{-1}(F_i)$, such that 
$V\notin I$ for any non-empty relatively open $V$ in $G$ and $f|G: G\w H$ is continuous.

\ms

A key element of our reasoning is the 
following counterpart of 
Claims
 \ref{claim for (i)}
 and 
 \ref{claimfor(ii)}.                                                                                                                                                    

\ms 

\begin{claim}\label{claim for (iii)}
Let $U$ be a  non-empty relatively open set in $G$ and let
 $\varepsilon >0$.  Then there exist   compacta   $L\sub \overline{U}$ and $M\sub\overline{f(U)}$ such that
\begin{enumerate}

\item[(2)] $L$ is boundary in $\overline{U}$ and $L\notin I$,

\item[(3)] $\mu(M)<\varepsilon$, 

\item[(4)] $L\sub \widecheck{f}_U[M]$.
\end{enumerate}
	
\end{claim}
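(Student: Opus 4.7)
The plan is to follow the schemes of Claims~\ref{claim for (i)} and~\ref{claimfor(ii)} verbatim, with ``$\mu$-smallness'' playing the role previously played by meagerness or zero-dimensionality. Using \s-finiteness and nonatomicity of $\mu$, together with inner regularity of $\sigma$-finite Borel measures on a compactum, I would first produce a countable family of compacta $(E_j)_{j\in\N}$ in $Y$ with $\mu(E_j)<\varepsilon$ for each $j$ and $\mu(H')=0$ for $H'=Y\setminus\bigcup_j E_j$: partition each $F_i$ from Section~\ref{proof of (iii)} by nonatomicity into Borel pieces of measure $<\varepsilon$, shrink each piece to a countable union of compacta of the same $\mu$-measure by inner regularity, and absorb the null residues into $H'$. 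Fix also a countable dense set $D\sub \overline{U}$; each $\{d\}$ belongs to $I$, and by condition~(3) we have $\overline{U}\notin I$.

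\emph{Case 1: $\widecheck{f}_U[E_j]\notin I$ for some $j$.} Calibratedness of $I$ applied to $\widecheck{f}_U[E_j]$ and the countable family $\{\{d\}:d\in D\}\sub I$ yields a compact $L\sub \widecheck{f}_U[E_j]\setminus D$ with $L\notin I$; disjointness from the dense set $D$ makes $L$ boundary in $\overline{U}$. Take $M=E_j$; then $\mu(M)<\varepsilon$ and $L\sub \widecheck{f}_U[M]$.

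\emph{Case 2: $\widecheck{f}_U[E_j]\in I$ for every $j$.} Applying calibratedness to $\overline{U}\notin I$ and the countable family $\{\widecheck{f}_U[E_j]\}_j\cup\{\{d\}:d\in D\}\sub I\cap K(X)$, I obtain a compact $L\sub \overline{U}$ not in $I$, disjoint both from $D$ and from every $\widecheck{f}_U[E_j]$, hence boundary in $\overline{U}$. Set $M=\{y\in Y:\widecheck{f}_U(y)\cap L\neq\emptyset\}$. Upper semicontinuity of $\widecheck{f}_U$ with compact values makes $M$ closed in $Y$, and the remark preceding~(6) in Section~\ref{sec:2} gives $M\sub \overline{f(U)}$. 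The identity $\widecheck{f}_U[Y]=\overline{U}$ from equation~(6) of that section yields $L\sub \widecheck{f}_U[M]$. Finally, $y\in E_j$ forces $\widecheck{f}_U(y)\sub \widecheck{f}_U[E_j]$, which is disjoint from $L$, so $y\notin M$; hence $M\sub H'$ and $\mu(M)\le\mu(H')=0<\varepsilon$.

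The main non-routine point is the initial decomposition, which requires combining \s-finiteness, nonatomicity and inner regularity to produce compacta of arbitrarily small $\mu$-measure covering $Y$ modulo a null set. Once this family $(E_j)$ is in hand, the dichotomy parallels the meager and zero-dimensional analogues, with Case~2---constructing a boundary $L\notin I$ outside all the ``bad'' images $\widecheck{f}_U[E_j]$ and then lifting to the compact set $M$ of small $\mu$-measure via upper semicontinuity of $\widecheck{f}_U$---being the substantive half.
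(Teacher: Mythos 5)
Your proof is correct and follows essentially the same two-case argument as the paper: split on whether some $\widecheck{f}_U$-image of a small-measure compactum lies outside $I$, use calibration (against singletons of a dense set) to extract a boundary compactum $L\notin I$, and in the second case define $M$ as the set of $y$ with $\widecheck{f}_U(y)\cap L\neq\emptyset$, which lands in the null complement. The only cosmetic difference is that you refine all the $F_i$ into compacta of measure $<\varepsilon$ up front, whereas the paper performs this finite refinement only on the single $F_i$ singled out in its Case 1.
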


\smallskip 


In order to prove the claim, we shall consider two cases.

\ms

{\sl Case 1. There exists $i$ such that $\widecheck{f}_U[F_i]\notin I$.}

\smallskip 

We can cover $F_i$ by finitely many compacta $M_0,\ldots, M_{n-1}$ with $\mu(M_j)<\varepsilon$ for each $j$. Then  for some $j$, $\widecheck{f}_U[M_j]\notin I$. We let $M=M_j$ and pick a  compactum 
$L\sub \widecheck{f}_U[M]$, not in $I$ and  boundary in $\overline{U}$.

\ms

{\sl Case 2. For all $i$,  $\widecheck{f}_U[F_i]\in I$.}

\smallskip 

Then we  pick a boundary in 
$\overline{U}$ compactum  $L\sub \overline{U}\setminus \bigcup\limits_i \widecheck{f}_U[F_i]$ not in $I$ and we let $M=\{y\in \overline{f(U)}: \widecheck{f}_U(y)\cap L\neq \emptyset \}$. Consequently,  $L\sub \widecheck{f}_U[M]$ and since $M$ is contained in $H$, $\mu(M)=0$, cf. (1). 

\ms

Having justified the claim, we can use Lemma \ref{main lemma} to define a generalized Hurewicz system $(U_s)_{s\in\sk}$, $(L_s)_{s\in\sk}$  with the associated family  $(M_s)_{s\in\sk}$ 
satisfying  for each $s\in\sk$ conditions (B1)--(B5) with $J_s$ being the collection of  compacta $M$ in $Y$ with $\mu(M)<\frac{1}{2^{e(s)}}$, where $e:\sk\w \N$ is a fixed bijection.

These conditions guarantee that the set $P$  determined by this system has  properties (C1)--(C5) (granted by part (C) of Lemma \ref{main lemma}) and, moreover,
 $\mu(\bigcup\{M_s: s\in\sk\})\leq 2$.  Since $f(P)\sub H$ and $\mu(H)=0$, cf. (1), it follows, cf. (C4), that $\mu(\overline{f(P)})\leq 2$. On the other hand, $P\notin I$ so in effect, for the compactum $C=\overline{f(P)}$ we have $\mu(C)<\infty$ but $f^{-1}(C)\notin I$ which contradicts our assumptions and ends the proof.~\qed



\section{Homogeneity notions related to $\sigma$-ideals}\label{homo}

 Recall, cf. Section \ref{sec:1}, that a \s-ideal $I$
on a \replaced{Polish space }{compactum} $X$ is  homogeneous, if for each $E\in  Bor(X)\setminus I$ there exists a Borel map $f:X\w E$ such that $f^{-1}(A)\in I$, whenever $A\in I$ (cf. \cite{zap}, \cite{zap1}). Examples of homogeneous \s-ideals include, cf. \cite{zap}:
\begin{itemize}

	\item   the \s-ideal of countable subsets of $X$,

\item the \s-ideal generated by compact sets in the irrationals,

\item the \s-ideal of meager Borel sets in the Cantor set,

\item the \s-ideal of Lebesgue-null Borel sets in the Cantor set.

\end{itemize}

\subsection{The \s-ideal $I(dim)$}\label{I(dim)}

Let (cf. Section \ref{sec:1}), $I(dim)$ be the \s-ideal 
 of Borel sets in the Hilbert cube $[0,1]^{\N}$ that can be covered by countably many finite-dimensional compacta, and let, for a compactum $X\sub [0,1]^{\N}$, $I_X(dim)$
 be the \s-ideal $I(dim)$ restricted to $Bor(X)$.

The \s-ideal  $I(dim)$ 
  is  not homogeneous in a strong way. To see this, let $X$ be a Henderson compactum  in $[0,1]^{\N}$, cf. 7.1, and let $Y\sub [0,1]^{\N}$ be a countable-dimensional compactum not in $I(dim)$, cf. \cite[Example 5.1.7]{E}.
  
  Since $I_X(dim)$ is calibrated, cf. 7.1,  by  Theorem \ref{main}(ii), there is no Borel map $f:B\w Y$ with $B\in Bor(X)\setminus I_X(dim)$ such that $f^{-1}(A)\in I_X(dim)$, whenever $A\in I_Y(dim)$.
  
  Applying a theory developed by Zapletal \cite{zap}, one infers that forcings associated with the  collections $Bor(X)\setminus I_X(dim)$ and $Bor(Y)\setminus I_Y(dim)$, partially ordered by inclusion, are not equivalent, cf. \cite{zap}, the final part of Section 2.3.
  
  This answers Question 3.1 of Zapletal \cite{zap2} (a partial answer was given in \cite{p}).

\subsection{The \s-ideals $J_0(\mu)$, $J_f(\mu)$}\label{I(mu)} 

Given a Borel measure $\mu$ on a compactum $X$, let (cf. Section \ref{sec:1}) $J_0(\mu)$, $J_f(\mu)$ be the \s-ideals of Borel sets in $X$ that can be covered by countably many compact sets of $\mu$-measure zero,  or finite $\mu$-measure, respectively.

\begin{subproposition}\label{non-hom}
	Let $\mu$ be a semifinite  nonatomic Borel measure on a compactum $X$ with $\mu(X)>0$.
	
\begin{enumerate}
	\item[(i)] 	The \s-ideal $J_0(\mu)$ is not homogeneous.

	\item[(ii)] If, moreover, $\mu$ is not \s-finite (in particular, $X\not\in J_f(\mu)$) and there exists a Borel set $Y\not\in J_f(\mu)$  with $\mu(Y)<\infty$ and $\mu|K(X)$ is a Borel mapping on the hyperspace $K(X)$,
    then 
	the \s-ideal $J_f(\mu)$ is not homogeneous.

\end{enumerate}

\end{subproposition}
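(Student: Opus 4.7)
The plan is to argue by contradiction in both parts, with Theorem~\ref{main}(iii) serving as the central tool: assuming the $\sigma$-ideal in question is homogeneous, I apply~(iii) to the witnessing Borel map together with a carefully selected auxiliary $\sigma$-finite nonatomic Borel measure on the target compactum, arranging things so that the compact set $C$ produced by~(iii) belongs to the target ideal while $f^{-1}(C)$ does not.

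For part~(i), the first step is to pick a compact set $E\subseteq X$ with $0<\mu(E)<\infty$, which exists by the semifiniteness of $\mu$ together with the inner regularity available for Borel measures on a compact metric space; then $E\notin J_0(\mu)$, and the homogeneity hypothesis provides a Borel $f\colon X\to E$ with $f^{-1}(A)\in J_0(\mu)$ for every $A\in J_0(\mu)$. Since $\mu|E$ is a finite nonatomic Borel measure on the compactum $E$, Theorem~\ref{main}(iii) applied with $I=J_0(\mu)$ and $Y=E$ produces a compact $C\subseteq E$ with $f^{-1}(C)\notin J_0(\mu)$. The key remaining step is to refine the construction so that in addition $\mu(C)=0$, whence $C\in J_0(\mu)$ and homogeneity yields $f^{-1}(C)\in J_0(\mu)$, the desired contradiction. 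This refinement comes from inspecting the proof in Section~\ref{proof of (iii)}: the dichotomy in Claim~\ref{claim for (iii)} distinguishes Case~1, where the compactum $M$ built at a given node satisfies $\mu(M)<\varepsilon$, from Case~2, where $\mu(M)=0$; the plan is to use the homogeneity hypothesis, together with a suitable exhaustion of $E$ by $\mu$-null compacta (available by the nonatomicity of $\mu|E$), to force the construction into Case~2 at every node, so that $\mu(M_s)=0$ for every $s$ and hence $\mu(\overline{f(P)})=0$.

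For part~(ii), the setup is parallel: with $J_f(\mu)$ assumed homogeneous, the target is the Borel set $Y\notin J_f(\mu)$ with $\mu(Y)<\infty$ furnished by the hypothesis, and one works inside the compactum $\overline{Y}\subseteq X$. The Borel property of $\mu|K(X)\colon K(X)\to[0,\infty]$ guarantees that $\{K\in K(\overline{Y})\colon\mu(K)<\infty\}$ is a Borel subset of the hyperspace, which is used to select a $\sigma$-finite nonatomic auxiliary measure $\nu$ on $\overline{Y}$ whose finite-$\nu$-mass compacta are exactly the compacta of finite $\mu$-measure. Applying Theorem~\ref{main}(iii) to the homogeneity witness $f\colon X\to Y\subseteq\overline{Y}$ with this $\nu$ then yields a compact $C\subseteq\overline{Y}$ with $\nu(C)<\infty$, hence $\mu(C)<\infty$, so $C\in J_f(\mu)$, while $f^{-1}(C)\notin J_f(\mu)$, contradicting homogeneity.

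The main obstacle in both parts is the choice of auxiliary measure and the parallel control of the Hurewicz-system construction of Theorem~\ref{main}(iii), so as to force the compactum $C$ output by~(iii) into the target $\sigma$-ideal: $\mu$-null in~(i), of finite $\mu$-measure in~(ii). In~(i) the main subtlety is keeping the inductive dichotomy in Case~2, which requires propagating the homogeneity hypothesis through the multi-valued function $\widecheck{f}_U$ at each step; in~(ii) it is the use of the Borel hypothesis on $\mu|K(X)$ to calibrate $\nu$ so that the finite-$\nu$-mass compacta coincide with the finite-$\mu$-measure ones.
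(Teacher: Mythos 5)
Your strategy of running Theorem~\ref{main}(iii) against a homogeneity witness is not what the paper does, and in both parts it has genuine gaps. In (i), Theorem~\ref{main}(iii) only produces a compact $C$ with $\mu(C)<\infty$, which is vacuous once you have chosen the target $E$ to be a compactum of finite measure; you acknowledge this and propose to force $\mu(C)=0$ by driving Claim~\ref{claim for (iii)} into Case~2 at every node using ``a suitable exhaustion of $E$ by $\mu$-null compacta''. No such exhaustion exists: a countable union of $\mu$-null compacta is $\mu$-null, while $\mu(E)>0$, so $E$ cannot be covered (even up to a null set) by null compacta, and nonatomicity does not help. Moreover, even granting Case~2 everywhere, the proof of (iii) gets $\mu(f(P))=0$ from $f(P)\sub H$ where $H$ is the complement of the covering compacta; with null compacta in place of the $F_i$, $H$ would have full measure and the argument collapses. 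The fix is to change the target, not the theorem: semifiniteness and nonatomicity give a Borel set $Y\notin J_0(\mu)$ with $\mu(Y)=0$ (a dense $G_\delta$ null subset of the support of $\mu$ restricted to a compactum of positive finite measure works, by a Baire category argument). Then for an \emph{arbitrary} Borel $f:X\w Y$, Lusin's theorem yields a compact $K$ with $\mu(K)>0$ on which $f$ is continuous, and $C=f(K)$ is a compact subset of $Y$, hence $\mu$-null and in $J_0(\mu)$, while $f^{-1}(C)\supseteq K\notin J_0(\mu)$. No Hurewicz system is needed.

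In (ii) there are two unsupported steps. First, applying Theorem~\ref{main}(iii) with $I=J_f(\mu)$ requires $J_f(\mu)$ to be calibrated; the paper establishes calibration for $J_0(\mu)$ and for $J_{\sigma}(\Haus^1)$, but not for $J_f(\mu)$, and you give no argument (indeed, in Section~\ref{inhomogeneity} the paper deliberately passes to the larger calibrated ideal $J_{\sigma}(\Haus^1)$ before invoking (iii)). Second, the auxiliary $\sigma$-finite nonatomic $\nu$ on $\overline{Y}$ ``whose finite-$\nu$-mass compacta are exactly the compacta of finite $\mu$-measure'' is never constructed, and there is no reason such a measure should exist. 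The hypothesis that $\mu|K(X)$ is Borel does not serve to calibrate an auxiliary measure; its actual role is to make the refined Lusin theorem \cite[Proposition 6.2]{p-z-2} applicable. That result directly gives, for any Borel $f:X\w Y$, a compact $K\notin J_f(\mu)$ on which $f$ is continuous; then $C=f(K)\sub Y$ is compact with $\mu(C)\leq\mu(Y)<\infty$, so $C\in J_f(\mu)$ while $f^{-1}(C)\supseteq K\notin J_f(\mu)$, which is the whole proof.
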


	\begin{proof}[Proof]
	(i)	
	Pick $Y\in Bor(X)\setminus J_0(\mu)$ with $\mu(Y)=0$ and any Borel map $f:X\w Y$. 
	By the Lusin theorem, there is a compact set $K$ in $X$ with $\mu(K)>0$ such that $f|K$ is continuous. If $C=f(K)$, then $C\in J_0(\mu)$ but $f^{-1}(C)\not\in J_0(\mu)$.
	
	\ms
	
	(ii)
	Pick $Y\in Bor(X)\setminus J_f(\mu)$ with $\mu(Y)<\infty$.  Let $f:X\w Y$ be any Borel function. By \cite[Proposition 6.2]{p-z-2},  there is a
	compact set $K$ in $X$ with $K\not\in J_f(\mu)$ (even of  non-\s -finite $\mu$-measure) such that $f|K$ is continuous. If $C=f(K)$, then $C\in J_f(\mu)$ but $f^{-1}(C)\not\in J_f(\mu)$.

	\end{proof}

In contrast to (ii) above, we shall show in Proposition \ref{counterexample} that for some \s-finite measures $\mu$ on compacta $X$ with $X\not\in J_f(\mu)$, the \s-ideal  $J_f(\mu)$ can be homogeneous.

\ms 

Recall that $\lambda$ and  $\Haus^1$ denote
the Lebesgue measure on $[0,1]$ and the 1-dimensional Hausdorff measure on the Euclidean square $[0,1]^2$, respectively. It is well known that the measure $\Haus^1$ (restricted to Borel sets in $[0,1]^2$) is nonatomic, semifinite but not \s-finite and  $\Haus^1|K([0,1]^2)$ is a Borel map (cf. \cite{r}). Moreover, it is easy to construct a dense $G_{\delta}$ set $Y$ in 
$[0,1]^2$ of $\Haus^1$-measure zero. Consequently, $Y\not\in J_f(\Haus^1)$ since, non-empty open sets in $[0,1]^2$ having infinite $\Haus^1$-measure, the \s-ideal $J_f(\Haus^1)$ contains meager sets only. This leads to the following corollary of Proposition \ref{non-hom}.

	\begin{subcor}\label{Haus-inhom}
 The \s-ideals $J_0(\lambda)$, $J_0(\Haus^1)$ and $J_f(\Haus^1)$ are not homogeneous.
		
	\end{subcor}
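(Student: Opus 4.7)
The plan is to apply Proposition \ref{non-hom} three times, verifying its hypotheses in each case; all the essential verifications for $\Haus^1$ have already been recorded in the paragraph immediately preceding the corollary.

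First I would handle $J_0(\lambda)$. The Lebesgue measure $\lambda$ on $[0,1]$ is nonatomic and \s-finite, hence semifinite, with $\lambda([0,1])=1>0$, so Proposition \ref{non-hom}(i) applies directly and yields non-homogeneity of $J_0(\lambda)$. Next, for $J_0(\Haus^1)$, I would invoke the facts noted in the preamble to the corollary: $\Haus^1$ on $[0,1]^2$ is nonatomic, semifinite, and satisfies $\Haus^1([0,1]^2)>0$ (indeed, non-empty open sets have infinite $\Haus^1$-measure). Another direct application of Proposition \ref{non-hom}(i) gives the non-homogeneity of $J_0(\Haus^1)$.

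The only case requiring genuine verification is $J_f(\Haus^1)$. Here I would check all the hypotheses of Proposition \ref{non-hom}(ii): $\Haus^1$ is nonatomic and semifinite but not \s-finite, and $\Haus^1 | K([0,1]^2)$ is Borel (all recorded in the preceding paragraph, with reference to \cite{r}). For the required Borel set $Y \notin J_f(\Haus^1)$ with $\Haus^1(Y) < \infty$, I would take the dense $G_\delta$ set $Y$ of $\Haus^1$-measure zero from the preceding paragraph. Since $Y$ is comeager and every element of $J_f(\Haus^1)$ is meager (because non-empty open subsets of $[0,1]^2$ have infinite $\Haus^1$-measure), $Y \notin J_f(\Haus^1)$, while trivially $\Haus^1(Y)=0<\infty$. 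Proposition \ref{non-hom}(ii) then delivers non-homogeneity of $J_f(\Haus^1)$.

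I expect no genuine obstacle here, since the corollary is essentially a matter of pattern-matching against Proposition \ref{non-hom}; the only point worth articulating carefully is that the dense $G_\delta$ set $Y$ constructed in the preamble is not in $J_f(\Haus^1)$, which reduces to the observation that meagerness of every member of $J_f(\Haus^1)$ is incompatible with $Y$ being comeager.
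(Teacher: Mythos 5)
Your proposal is correct and matches the paper's intended argument exactly: the corollary is stated as a direct consequence of Proposition \ref{non-hom}, with all the needed facts about $\lambda$ and $\Haus^1$ (nonatomicity, semifiniteness, non-$\sigma$-finiteness, Borelness of $\Haus^1|K([0,1]^2)$, and the dense $G_\delta$ null set $Y\notin J_f(\Haus^1)$) recorded in the paragraph immediately preceding it. Your careful note that $Y\notin J_f(\Haus^1)$ because members of $J_f(\Haus^1)$ are meager while $Y$ is comeager is precisely the justification the paper gives.
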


\subsection{The partial orders $Bor(X)\setminus J_0(\mu)$ and  $Bor(X)\setminus J_f(\mu)$}\label{the PO's}

Let us now shift our attention from the \s-ideals
$J_0(\mu)$ and $J_f(\mu)$ to the collections of Borel sets $Bor(X)\setminus J_0(\mu)$ and 
$Bor(X)\setminus J_f(\mu)$, partially ordered by inclusion. The key step in the proof of Theorem \ref{quotients} is the following result.

\begin{subtheorem}\label{homogeneity-Lwow}\hfill\null
	
\mbox{\rm (i)}		There is a copy of the irrationals $P$ in $[0,1]^2$ such that

		\begin{itemize}

			\item $P\notin J_f(\Haus^1)$,

			\item if $\mu$ is any nonatomic Borel measure  on a compactum $X\not\in J_f(\mu)$
			such that every Borel set $B\not\in J_f(\mu)$ contains a Borel set 
			$C\not\in J_f(\mu)$ with $\mu(C)<\infty$, then for each $B\in Bor(X)\setminus J_f(\mu)$ there is a homeomorphic embedding $h: P\w B$ such that, for $A\sub P$, $A\in J_f(\Haus^1)$ if and only if $h(A)\in J_f(\mu)$. 
			
		\end{itemize}
		
\mbox{\rm (ii)}	There is a copy of the irrationals $P$ in $[0,1]$ such that

		\begin{itemize}
			\item $P\notin J_0(\lambda)$,

			\item for any semifinite nonatomic Borel measure $\mu$  on a compactum $X$ with $\mu(X)>0$ and
			for each $B\in Bor(X)\setminus J_0(\mu)$ there is a homeomorphic embedding $h: P\w B$ such that, for $A\sub P$, $A\in J_0(\lambda)$ if and only if $h(A)\in J_0(\mu)$.
			
		\end{itemize}


\end{subtheorem}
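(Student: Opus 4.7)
The plan is to realize the universal $P$ as $K\setminus Q$ where $K$ is a carefully chosen Cantor set (in $[0,1]$ for part (ii), in $[0,1]^2$ for part (i)) and $Q\subseteq K$ is countable dense, and then, given $B$ outside the relevant $\sigma$-ideal, to extract a Cantor set $K^*\subseteq B$ together with a homeomorphism $\phi\colon K\to K^*$ that preserves the ideal structure; the desired embedding is $h:=\phi|_P$. The key external input is Oxtoby's theorem \cite{Ox} on homeomorphic nonatomic Borel measures on the Cantor set.

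For part (ii), take $K\subseteq[0,1]$ a fat Cantor set on which $\lambda|K$ has full support; then $\lambda(P)=\lambda(K)>0$ so $P\notin J_0(\lambda)$. Given $B\notin J_0(\mu)$, since $J_0(\mu)$ is generated by compact sets one reduces to the case that $B$ is compact with $\mu(B)>0$; semifiniteness and inner regularity then produce a compact $K_0\subseteq B$ with $0<\mu(K_0)<\infty$, and nonatomicity lets us carve out a Cantor set $K^*\subseteq K_0$ on which $\mu$ is nonatomic with full support. Applying Oxtoby's theorem to the normalized probability measures $\lambda|K/\lambda(K)$ and $\mu|K^*/\mu(K^*)$ furnishes a homeomorphism $\phi\colon K\to K^*$ pushing one measure to the other, hence preserving Borel null sets; together with preservation of compact sets this transports $J_0(\lambda)|K$ bijectively onto $J_0(\mu)|K^*$. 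The required equivalence for $h=\phi|_P$ follows by intersecting covering compacta with $K$ on one side and with $K^*$ on the other. For part (i), take $K\subseteq[0,1]^2$ a Cantor set such that $\Haus^1$ fails to be $\sigma$-finite on every nonempty relatively open subset of $K$ (available by \cite[Proposition~6.2]{p-z-2}); then $P\notin J_f(\Haus^1)$, for otherwise $K=P\cup Q$ would be covered by countably many compacta of finite $\Haus^1$-measure, contradicting the non-$\sigma$-finiteness of $\Haus^1|K$. Given $B\notin J_f(\mu)$, use the hypothesis of~(i) to find Borel $C_B\subseteq B$ with $\mu(C_B)<\infty$ and $C_B\notin J_f(\mu)$, and inside $C_B$ construct a Cantor set $K^*$ on which $\mu$ is nonatomic with full support, then invoke Oxtoby to produce $\phi\colon K\to K^*$.

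The main obstacle lies in part (i): because $\Haus^1|K$ is not $\sigma$-finite while $\mu|K^*$ is necessarily finite, Oxtoby's theorem cannot match the measures themselves, only null structure. The remedy is to arrange the Cantor set $K$ so that $J_f(\Haus^1)|K$ admits a purely topological description --- say, as the ideal generated by compact subsets of $K$ of a specific ``thin'' type --- matching the topological description of $J_f(\mu)|K^*$, which, since $\mu(K^*)<\infty$, reduces to the $K_\sigma$-ideal of Borel subsets of $K^*$. In effect the homeomorphism $\phi$ is produced not as a measure-preserving map in the Oxtoby sense but as a topological isomorphism of two $\sigma$-ideals; this can be carried out by running matched generalized Hurewicz schemes, in the style of Lemma~\ref{main lemma}, simultaneously in $K$ and in $C_B$, so that the correspondence of compacta is preserved at every stage of the Cantor-scheme construction.
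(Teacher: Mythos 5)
Your proposal founders on the fact that $J_0(\mu)$ and $J_f(\mu)$ are generated by \emph{compact} sets, not by measure alone, and this breaks the construction at three separate points. (a) In part (ii) you reduce to the case that $B$ is compact with $\mu(B)>0$ and then extract a Cantor set $K^*\sub B$ of positive finite measure. But take $\mu=\lambda$ on $X=[0,1]$ and $B$ a dense $G_\delta$ with $\lambda(B)=0$: then $B\notin J_0(\lambda)$ (any countable cover of $B$ by compact $\lambda$-null sets would be meager, while $B$ is comeager), yet \emph{every} compact subset of $B$ is null, so no such $K^*$ exists. This ``null but not in $J_0(\mu)$'' case is not a degenerate one to be reduced away --- it is precisely the case the theorem must handle. (b) In part (i) you place $K^*$ inside a Borel set $C_B$ with $\mu(C_B)<\infty$. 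Then $K^*$ is itself a compact set of finite $\mu$-measure, so every Borel subset of $K^*$ belongs to $J_f(\mu)$, and the required equivalence fails already for $A=P$ (since $P\notin J_f(\Haus^1)$ but $h(P)\in J_f(\mu)$). Your proposed remedy makes this explicit rather than fixing it: the ``$K_\sigma$-ideal of Borel subsets of $K^*$'' is all of $Bor(K^*)$. (c) Even where a suitable Cantor set of positive finite measure is available, Oxtoby's theorem does not assert that any two nonatomic fully supported probability measures on Cantor sets are homeomorphic; that statement is false (the Bernoulli $(\tfrac12,\tfrac12)$ and $(\tfrac13,\tfrac23)$ measures on $2^{\N}$ have different clopen value sets, so no homeomorphism carries one to the other). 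Oxtoby's theorems in \cite{Ox} concern the space of irrationals.

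The correct shape of the argument --- which your closing sentence gestures at but which is incompatible with housing $h(P)$ inside a single Cantor set --- is the one the paper uses: build $P$ by a generalized Hurewicz system so that $\overline{P}=P\cup\bigcup_s L_s$, where the $L_s$ are pairwise disjoint Cantor sets of prescribed measure attached along the frontier of $P$ and every nonempty relatively open subset of $\overline{P}$ contains some $L_s$. Membership of $A\sub P$ in the ideal is then detected by the traces $\overline{A}\cap L_s$ of the closure on these frontier sets, not by the measure of $A$ itself (indeed $\Haus^1(P)=0$ and $\mu(h(P))=0$ in the paper's construction). The structure is transported into $B$ by running a matched Hurewicz system inside a null copy of the irrationals $G'\sub B$ whose relatively open sets are outside the ideal, with the measure-matching on the $L_s$ done via Oxtoby's theorems for the irrationals (passing through $L\times\mathcal N$) together with Gelbaum's and Pollard's theorems. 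None of these steps survives the replacement of $P$ by a Cantor set minus a countable set.
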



	



\begin{proof}

(i) Let $G$ be a copy of the irrationals  in $[0,1]^2$ which is $\Haus^1$-null and dense in $[0,1]^2$. Consequently, if $U$ is a  non-empty relatively open set in $G$, then $\overline{U}\notin J_f(\Haus^1)$ and since $\Haus^1(G)=0$ it follows that $\Haus^1(\overline{U}\setminus G)=\infty$. Hence
there is a  Cantor set $L\sub \overline{U}\setminus G$ with 
 $\Haus^1(L)=1$ (cf. \cite{r}).

This observation can be used  to define a generalized Hurewicz system $(U_s)_{s\in\sk}$, $(L_s)_{s\in\sk}$ such that, in particular, the following conditions are satisfied for each $s\in \sk$:

\begin{enumerate}

	\item[(1)] $L_s$ is a Cantor set with $\Haus^1(L_s)=1$,
	
	\item[(2)] $U_s$ is a non-empty relatively clopen subset of $G$,
	
	
	
	\item[(3)] $\lim\limits_{i\to\infty}\hbox{diam}(U_{s^\smallfrown i})= 0$, $\hbox{diam}(U_s)\leq 2^{-length(s)}$, with respect to a  fixed complete metric on $G$.
	

\end{enumerate}

 These conditions   guarantee,  cf. Section \ref{sec:2},  that the copy of the irrationals $P$
 determined by this system has the following properties:  
 \begin{enumerate}
 	\item[(4)] $\overline{P} = P \cup \bigcup\{L_s: s\in\sk\}$ and $L_s\cap L_t=\emptyset$ for $s\neq t$,
 	
 	\item[(5)]  each nonempty relatively open subset of $\overline{P}$ contains infinitely many sets $L_s$.
 
 \end{enumerate}

In particular, by a Baire category argument,  $P\notin J_f(\Haus^1)$.


\smallskip 

Let us now consider an arbitrary $B\in Bor(X)\setminus J_f(\mu)$. By the properties of $\mu$ without loss of generality we can assume that $\mu(B)<\infty$. By a theorem of Solecki \cite{s1} we can first find a $G_\delta$ in $B$ not in $J_f(\mu)$ and then 
shrinking it further 
we can pick a copy of the irrationals $G'$ in $B$ with $\mu(G')=0$ such that for each non-empty relatively open set $U'$ in $G'$, \added{we have} \replaced{$U'\notin J_f(\mu)$ }{$\overline{U'}\notin J_f(\mu)$} so, in particular, $\mu(\overline{U'}\setminus G')=\infty$. 

A key element of our reasoning is the following observation.

\begin{subclaim}\label{claim for 2}
	Let $U$ and $U'$ be non-empty relatively open sets in $G$ and $G'$, respectively. Let 
	 $L\sub \overline{U}\setminus U$ be a  Cantor set with $\Haus^1(L)=1$. 
		
	Then there exist a Cantor set  $L'\sub \overline{U'}\setminus U'$ and a homeomorphism $g:U\cup L \w U'\cup L'$ such that $\mu(L')=1$ and
	$g|L$ is a measure preserving homeomorphism between $L$ and $L'$.  
	
\end{subclaim}

To prove the claim, we can first appeal to  results of Oxtoby \cite{Ox} to find a Cantor set 
 $L'\sub \overline{U'}\setminus U'$ and 
 a measure preserving homeomorphism $f:L\w L'$. 
 
 More precisely, let $\mathcal N$  denote    the  set  of the irrationals   in  $[0,  1]$,   and   let  $\lambda$
 be   the  restriction     of the  Lebesgue    measure  on $[0,  1]$ to  the   Borel   subsets   of
 $\mathcal N$. 
 Considering the product $P_1=L\times {\mathcal N}$, one can identify $L$ with a  subspace of  $P_1$, a copy of the irrationals equipped with a 
 Borel measure $\nu$ such that
 
 \begin{enumerate}
 	
 \item[(6)] $\nu(P_1)<\infty$,	
 
 \item[(7)] $\nu(\{x\})=0$ for each $x\in P_1$,
 
 \item[(8)] $\nu(U)>0$ for every non-empty open set in  $P_1$,
 
 \item[(9)] $\nu$ coincides with $\Haus^1$ on Borel sets in $L$.
 	
 \end{enumerate}
 
  In effect, by a theorem of Oxtoby \cite[Theorem 1]{Ox}, properties (6)--(8) guarantee that there is a homeomorphism $\varphi_1: {\mathcal N}\w P_1$ such that $\nu(\varphi_1(A))=\nu(P_1)\cdot\lambda(A)$ for any Borel set $A$ in $\mathcal N$.
 
 On the other hand, by theorems of Gelbaum \cite{Ge} and Oxtoby \cite[Theorem 2]{Ox}, there is  a copy of the irrationals $P_2$ in  $\overline{U'}\setminus U'$ with $\mu(P_2)=\nu(P_1)$ and a homeomorphism  
 $\varphi_2: {\mathcal N}\w P_2$ such that $\mu(\varphi_2(A))=\nu(P_1)\cdot\lambda(A)$ for any Borel set $A$ in $\mathcal N$.
 
  Now it suffices to let $L'=(\varphi_2\circ\varphi_1^{-1})(L)$ and
  $f=\varphi_2\circ\varphi_1^{-1}|L$ to obtain a desired Cantor set $L'$ and   a measure preserving homeomorphism $f:L\w L'$.
 
 \smallskip 
 
 Finally, since $U\cup L$ and $U'\cup L'$ are copies of the irrationals, a theorem of Pollard \cite{Po} provides an extension of $f$ to a homeomorphism
 $g:U\cup L \w U'\cup L'$ .

\ms

Having justified the claim, we can use it to define a generalized Hurewicz system $(U'_s)_{s\in\sk}$, $(L'_s)_{s\in\sk}$ of subsets of $G'$  together with homeomorphisms
\begin{enumerate}
	
\item[(10)]  $
 h_s: L_s\cup \bigcup_i U_{s\smallfrown i}\w
   L'_s\cup \bigcup_i U'_{s\smallfrown i}
   $,
\end{enumerate}    
   \noi
   satisfying the following conditions  for each $s\in \sk$:

\begin{enumerate}
	
	\item[(11)]
	$h_s(U_{s\smallfrown i})=U'_{s\smallfrown i}$,
	
	\item[(12)] $\mu(L'_s)=1$ and
	$h_s|L_s:L_s\w L'_s$ is measure preserving,

		\item[(13)] $U'_s$ is a non-empty relatively clopen subset of $G'$,
	
	
	
	
	
	\item[(14)] $\lim\limits_{i\to\infty}\hbox{diam}(U'_{s^\smallfrown i})= 0$ and
	 $\hbox{diam}(U'_s)\leq 2^{-length(s)}$, with respect to a  fixed complete metric on $G'$.
	

\end{enumerate}

More precisely, we let $U'_{\emptyset}=G'$ and given $U'_s$, we select $L'_s$, $U'_{s\smallfrown i}$ and $h_s$  as follows. Claim \ref{claim for 2}
provides  a Cantor set  $L'_s\sub \overline{U'_s}\setminus U'_s$ and a homeomorphism $g_s:U_s\cup L_s \w U'_s\cup L'_s$ such that $\mu(L'_s)=1$ and
$g_s|L_s$ is a measure preserving homeomorphism between $L_s$ and $L'_s$. 
For each $i\in\N$ let $W_i=g_s(U_{s\smallfrown i})$ and pick a non-empty
relatively clopen  set $U'_{s\smallfrown i}\sub W_i$ in $G'$ such that  $\hbox{diam}(U'_{
s\smallfrown i})\leq 2^{-(length(s)+i)}$ (with respect to a  fixed complete metric on $G'$). Since $W_i$ and $U'_{s\smallfrown i}$ are copies of  the irrationals, there are homeomorphisms $u_i: W_i\w U'_{s\smallfrown i}$ 
which give rise to a homeomorphism $h_s$, letting $h_s|L_s= g_s|L_s$ and
$h_s|U_{s\smallfrown i} = u_i\circ g_s|U_{s\smallfrown i}$.
	
	\smallskip 
		
Let $P'\sub G'\sub B$ be the copy of the irrationals determined by the	 system $(U'_s)_{s\in\sk}$, $(L'_s)_{s\in\sk}$.	Then, exactly as in the case of $P$, we have, cf. (3), (4), 			
			
 \begin{enumerate}
 	\item[(15)] $\overline{P'} = P' \cup \bigcup\{L'_s: s\in\sk\}$ and $L_s'\cap L_t'=\emptyset$ for $s\neq t$,
 	
 	\item[(16)]  
 	$P'\notin J_f(\mu)$.
 	 	
 \end{enumerate}

Note that if $e\in\baire$, both $\bigcap_m U_{e|m}$ and $\bigcap_m U'_{e|m}$ are singletons, which gives rise to a homeomorphism $h:P\w P'$, defined by letting
 \begin{enumerate}
\item[(17)] $h(x)\in \bigcap_m U'_{e|m}$ for  $x\in \bigcap_m U_{e|m}$.
 \end{enumerate}
For any $s\in\sk$, if $x\in P\cap U_{s\smallfrown i}$, both $h(x)$ and $h_s(x)$ belong to the set $U'_{s\smallfrown i}$ of diameter less than $2^{(-length(s)+i)}$. This, combined with (10), leads to the following observation: for any relatively closed set $C$ in $P$ and $s\in \sk$, 
$h_s(\overline{C}\cap L_s)= \overline{h(C)}\cap L'_s$, and in effect, $\Haus^1(\overline{C}\cap L_s)= \mu(\overline{h(C)}\cap L'_s)$.

Taking into account (4), (15) and the fact that $\Haus^1(P)=\mu(P')=0$, we conclude that for each relatively closed set $C$ in $P$, 
$\Haus^1(\overline{C})=\mu(\overline{h(C)})$ and this shows that for every $A\sub P$, $A\in J_f(\Haus^1)$ if and only if $h(A)\in J_f(\mu)$. 

\smallskip

(ii) We shall modify the proof of part (i) above in the following way. 
Let $G$ be a copy of the irrationals  in $[0,1]$ which is $\lambda$-null and dense in $[0,1]$. Consequently, if $U$ is a  non-empty relatively open set in $G$, then 
there is a  Cantor set $L\sub \overline{U}\setminus G$ with 
$\lambda(L)>0$. It follows that we may define a generalized Hurewicz system $(U_s)_{s\in\sk}$, $(L_s)_{s\in\sk}$ satisfying for each $s\in \sk$ conditions (1)--(5) with (1) replaced by $\lambda(L_s)>0$.  The copy of \added{the} irrationals $P$
determined by this system 
has properties (4)--(5)
and in effect, $P\notin J_0(\lambda)$.

If now  $B\in Bor(X)\setminus J_0(\mu)$, then by the properties of $\mu$ and a theorem of Solecki \cite{s1} 
we can pick a copy of the irrationals $G'$ in $B$ with $\mu(G')=0$ such that for each non-empty relatively open set $U'$ in $G'$, \added{we have} \replaced{$U'\notin J_0(\mu)$ }{$\overline{U'}\notin J_0(\mu)$} so, in particular, $\mu(\overline{U'}\setminus G')>0$. 

A  refinement of the proof of Claim \ref{claim for 2} leads to the following observation

\begin{subclaim}\label{claim for (ii)}
	Let $U$ and $U'$ be non-empty relatively open sets in $G$ and $G'$, respectively. Let 
	$L\sub \overline{U}\setminus U$ be a  Cantor set with $\lambda(L)>0$. 
	
	Then there exist a Cantor set  $L'\sub \overline{U'}\setminus U'$ and a homeomorphism $g:U\cup L \w U'\cup L'$ such that $\mu(L')>0$ and
	$g|L$ is a  homeomorphism between $L$ and $L'$ preserving measure up to a positive constant factor. In particular, for every $A\sub L$, $\lambda(A)=0$ if and only if $\mu(g(A))=0$.    
	
\end{subclaim}

We can now use the claim to define a generalized Hurewicz system $(U'_s)_{s\in\sk}$, $(L'_s)_{s\in\sk}$ of subsets of $G'$  together with homeomorphisms $h_s$ satisfying conditions (10)--(14) with (12) replaced by the requirements that 
$\mu(L'_s)>0$ and 
$h_s|L_s:L_s\w L'_s$ preserves measure up to a positive constant factor.

Arguing as before, we conclude that for each relatively closed set $C$ in $P$, 
$\lambda(\overline{C})=0$ if and only if $\mu(\overline{h(C)})=0$ and this shows that for any $A\sub P$, $A\in J_0(\lambda)$ if and only if $h(A)\in J_0(\mu)$, completing the proof of part (ii) and the proof of the theorem.

\end{proof}

Let us observe that the measure $\Haus^1$ itself  has the properties described in part (i) of Theorem \ref{homogeneity-Lwow}.

\begin{subremark}\label{haus}
 Every Borel set $B\not\in J_f(\Haus^1)$ contains a Borel set 
$C\not\in J_f(\Haus^1)$ with $\Haus^1(C)=0$.

\end{subremark}

\begin{proof}
By a theorem of Solecki \cite{s1} we  find a \added{non-empty} $G_\delta$ \deleted{sub}set $G$ in $B$ such that no non-empty relatively open set $U$ in $G$ is in $J_f(\Haus^1)$. Consequently, every element of $J_f(\Haus^1)$ below $G$  is meager in $G$ so it suffices to pick a dense $G_\delta$ subset $C$ of $G$ with $\Haus^1(C)=0$.

\end{proof}

\subsection{The proof of Theorem \ref{quotients}}\label{the proof of quotients}


To begin with let us make the following observation.

\begin{subproposition}\label{cellularity} Let $\mu$ be a 
	nonatomic Borel measure on a  compactum $X$. 
\begin{enumerate}
		\item[(i)] 
			Assume that every Borel set $B\not\in J_f(\mu)$ contains a Borel set 
			$C\not\in J_f(\mu)$ with $\mu(C)<\infty$.
			If  $B\in Bor(X)\setminus J_f(\mu)$ then there is a function $\varphi: \cantor\w Bor(B)\setminus J_f(\mu)$ such that
	  for any distinct $c,d \in \cantor$, $\varphi(c)\cap\varphi(d) = \emptyset$.
	 
	\item[(ii)] 
Assume that $\mu$ is semifinite. If $B\in Bor(X)\setminus J_0(\mu)$, then	 there is a function $\varphi: \cantor\w Bor(B)\setminus J_0(\mu)$ such that
for any distinct $c,d \in \cantor$, $\varphi(c)\cap\varphi(d) = \emptyset$.
	
\end{enumerate}

\end{subproposition}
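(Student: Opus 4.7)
The plan is to refine the construction from the proof of Theorem \ref{homogeneity-Lwow} to an enriched generalized Hurewicz system whose branches at each node are indexed by $2\times\N$ rather than $\N$. In case (i), by the hypothesis and the reductions from the proof of Theorem \ref{homogeneity-Lwow}(i), I fix a copy of the irrationals $G'\sub B$ with $\mu(G')=0$ and $\mu(\overline{U'}\sm G')=\infty$ for every nonempty relatively open $U'\sub G'$; in case (ii), the analogous reduction from Theorem \ref{homogeneity-Lwow}(ii) yields such $G'\sub B$ with $\mu(\overline{U'}\sm G')>0$.

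Inductively I build $\{U_\alpha:\alpha\in(2\times\N)^{<\N}\}$, nonempty relatively clopen subsets of $G'$, with $U_\emptyset=G'$, satisfying at each $\alpha$: (a) the children $\{U_{\alpha\smallfrown(c,n)}:(c,n)\in 2\times\N\}$ are pairwise disjoint with pairwise disjoint closures, $\overline{U_{\alpha\smallfrown(c,n)}}\cap G'\sub U_\alpha$, and $\hbox{\rm diam}(U_{\alpha\smallfrown(c,n)})\le 2^{-|\alpha|-1}$; and (b) for each $c\in\{0,1\}$ the limit set $L^c_\alpha:=\bigcap_j\overline{\bigcup_{n>j}U_{\alpha\smallfrown(c,n)}}$ is a Cantor subset of $\overline{U_\alpha}\sm G'$ of $\mu$-measure $\ge 1$ (case (i)) or $>0$ (case (ii)), with $L^0_\alpha\cap L^1_\alpha=\emptyset$. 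The existence step reduces to picking two disjoint Cantor subsets of $\overline{U_\alpha}\sm G'$ of the required mass---straightforward since $\mu(\overline{U_\alpha}\sm G')$ is either infinite or positive, and nonatomicity together with semifiniteness supply the necessary splittings. Moreover, standard properties of generalized Hurewicz systems (disjointness of $\overline{U_\alpha}$ and $\overline{U_{\alpha'}}$ for incomparable $\alpha,\alpha'$ together with the identity $L^c_\alpha\cap\overline{U_{\alpha\smallfrown(c'',n)}}=\emptyset$) automatically give that all the $L^c_\alpha$ are pairwise disjoint across the whole tree.

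For each $c=(c_0,c_1,\ldots)\in\cantor$, set
\[
\varphi(c):=\bigcap_{k\in\N}\bigcup\{U_\alpha:\alpha=((c_0,n_0),\ldots,(c_{k-1},n_{k-1})),\ n_i\in\N\},
\]
a Borel ($G_\delta$) subset of $B$. This is the set determined by the generalized Hurewicz subsystem formed by the \emph{$c$-consistent} nodes (those of the above form), so $\overline{\varphi(c)}=\varphi(c)\cup\bigcup_\alpha L^{(c)}_\alpha$ with $L^{(c)}_\alpha:=L^{c_{|\alpha|}}_\alpha$ ranging over $c$-consistent $\alpha$, and every nonempty relatively open subset of $\overline{\varphi(c)}$ contains $L^{(c)}_\alpha$'s for arbitrarily long $\alpha$ (cf.~(2) in Section \ref{sec:2}). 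For distinct $c,c'\in\cantor$ with first disagreement at index $k_0$, the level-$(k_0+1)$ nodes occurring in the definition of $\varphi(c)$ resp.~$\varphi(c')$ end with pairs $(c_{k_0},\cdot)$ resp.~$(c'_{k_0},\cdot)$, so are distinct at the same level and their $U$'s are disjoint by~(a); hence $\varphi(c)\cap\varphi(c')=\emptyset$.

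To see $\varphi(c)$ is not in the respective $\sigma$-ideal, I argue by Baire category on the Polish compactum $\overline{\varphi(c)}$: were $\varphi(c)\sub\bigcup_m K_m$ with $K_m$ compact of finite $\mu$-measure (case (i)) or $\mu(K_m)=0$ (case (ii)), then the countable closed cover $\{K_m\cap\overline{\varphi(c)}\}\cup\{L^{(c)}_\alpha\}$ of $\overline{\varphi(c)}$ would contain an element with nonempty interior; but the $L^{(c)}_\alpha$'s have empty interior in $\overline{\varphi(c)}$ (since $\varphi(c)$ is dense there while disjoint from each $L^{(c)}_\alpha$), so some $K_m$ contains a nonempty relatively open $V$ of $\overline{\varphi(c)}$. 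Then $\overline{V}\sub K_m$ contains infinitely many pairwise disjoint $L^{(c)}_\alpha$, giving $\mu(K_m)=\infty$ in case (i) (contradicting $\mu(K_m)<\infty$) or $\mu(K_m)>0$ in case (ii) (contradicting $\mu(K_m)=0$). The main obstacle is upgrading the existing Hurewicz-style construction to the doubled branching---namely, producing \emph{two} disjoint Cantor sets of the required $\mu$-measure at each node and arranging that the standard disjointness properties of Hurewicz systems carry over; both points are immediate from the measure hypotheses on $G'$ and from the definitions, respectively.
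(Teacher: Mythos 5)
Your proposal is correct and takes essentially the same route as the paper: the paper builds a single generalized Hurewicz system on a $\mu$-null copy of the irrationals $G\sub B$ (reusing the construction from the proof of Theorem \ref{homogeneity-Lwow}) and obtains $\varphi(c)$ by restricting to the nodes $s$ with $s(i)+c(i)$ even, which is just a parity-coded version of your explicit doubling of the branching; the disjointness and the Baire-category-plus-measure argument are the same. One small correction: in case (i) semifiniteness is not among the hypotheses, so the extraction of two disjoint compacta of $\mu$-measure at least $1$ inside $\overline{U_\alpha}\sm G'$ should instead be justified as in Claim \ref{claim for 2} (via the Gelbaum--Oxtoby results, which is how the paper gets its limit sets), or by observing that $\overline{U_\alpha}\sm G'$ is $\sigma$-compact of infinite measure, from which nonatomicity alone yields the required splitting.
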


\begin{proof}
To prove part (i), arguing as at the beginning of the proof of Theorem \ref{homogeneity-Lwow}(i) we  pick a copy of the irrationals $G$ in $B$ with $\mu(G)=0$ such that for each non-empty relatively open set $U$ in $G$, $\overline{U}\notin J_f(\mu)$.  This leads to a generalized Hurewicz system $(U_s)_{s\in\sk}$, $(L_s)_{s\in\sk}$ that determines a homeomorphic copy of the set $P\notin J_f(\mu)$. 

For each $c\in \cantor$ we let 
$$
S_c=\{s\in \sk: s(i)+c(i) \hbox{ is even for every}\ i<length(s) \}.
$$
and
$$
\varphi(c)=  \bigcap_n \bigcup \{U_s: s\in S_c \hbox{ and } \hbox{length(s)} = n\}.
$$

Thus $\varphi(c)$ 
may be viewed as the copy of the irrationals in $G$ determined by the system	
$(U_s)_{s\in S_c}$, $(L_s)_{s\in S_c}$. In particular, $\varphi(c)\not\in J_f(\mu)$ and, moreover,  $\varphi(c)\cap\varphi(d) = \emptyset$ for any distinct $c,d \in \cantor$.

\smallskip 

Part (ii) can be proved analogously along the lines of the first part of the  proof of Theorem  \ref{homogeneity-Lwow}(ii).

\end{proof}

We are now ready to complete the proof of 
 Theorem \ref{quotients}.
 
 \ms
 
 To prove part (i), let $P$ be a copy of the irrationals in $[0,1]^2$, the existence of which is guaranteed by Theorem \ref{homogeneity-Lwow}(i). Let $\mathbf{A}$ ($\mathbf{B}$) be the quotient Boolean algebra $Bor(P)/(J_f(\Haus^1)\cap Bor(P))$ ($Bor(X)/J_f(\mu)$, respectively).
 
  Let us note that if $B\in Bor(X)\setminus J_f(\mu)$ and $h: P\w B$ is a homeomorphic embedding such that, for $A\sub P$, $A\in J_f(\Haus^1)$ if and only if $h(A)\in J_f(\mu)$, then $h$ induces an isomorphisms from $\mathbf{A}$ onto 
 the quotient Boolean algebra $Bor(h(P))/(J_f(\mu)\cap Bor(h(P))$. Consequently,
  by Theorem \ref{homogeneity-Lwow}(i), the family ${\mathbf{C}}$ of all non-zero elements $\mathbf{c} \in \mathbf{B}$ such that the relative algebra $\mathbf{B}\restriction \mathbf{c}$ is isomorphic to  $\mathbf{A}$, is dense in $\mathbf B$. 
  
  Let $\overline{\mathbf{A}}$ ($\overline{\mathbf{B}}$) be the completion of  ${\mathbf{A}}$
  (${\mathbf{B}}$, respectively). It follows that for each $\mathbf{c} \in \mathbf{C}$  the relative algebra $\overline{\mathbf{B}}\restriction \mathbf{c}$ is isomorphic to  $\overline{\mathbf{A}}$ and $\mathbf{C}$ is dense in $\overline{\mathbf B}$. 
  Moreover, Proposition \ref{cellularity} implies that for any non-zero element $\mathbf{b} \in \overline{\mathbf{B}}$, the  algebra $\overline{\mathbf{B}}\restriction \mathbf{b}$ has cellularity continuum from which it follows, $\mathbf{C}$ being dense below  $\mathbf{b}$,  that $\overline{\mathbf{B}}\restriction \mathbf{b}$ is isomorphic to the product of continuum many isomorphic copies of the algebra 
 $\overline{\mathbf{A}}$ (cf. \cite[Proposition 6.4]{ko}). This shows that the algebra $\overline{\mathbf{B}}$ is homogeneous (cf. \cite[Definition 9.12]{ko}). In effect, since $\overline{\mathbf{B}}\restriction \mathbf{b}$ is isomorphic to  $\overline{\mathbf{A}}$ for some  non-zero  $\mathbf{b} \in \overline{\mathbf{B}}$,  the algebras $\overline{\mathbf{B}}$ and   $\overline{\mathbf{A}}$ are isomorphic. In view of Remark \ref{haus}, the same applies to the completion of the quotient Boolean algebra  $Bor([0,1]^2)/J_f(\Haus^1)$ which completes the proof of part (i) of Theorem \ref{homogeneity-Lwow}.
 
 \smallskip 
 
 To prove part (ii), we  follow closely the preceding argument, appropriately   applying Theorem \ref{homogeneity-Lwow}(ii). Thus the proof of Theorem \ref{homogeneity-Lwow} is completed. \qed

 \ms

In particular, the partial order 
$Bor([0,1]^2)\setminus J_f(\Haus^1)$ is forcing homogeneous, while the \s-ideal 
$J_f(\Haus^1)$ is not homogeneous, and the same is true if $J_f(\Haus^1)$ is replaced by $J_0(\lambda)$.
As already observed in Section \ref{sec:1}, it seems that examples illustrating this phenomenon did not appear in the literature, (cf.  \cite{zap}, comments following Definition 2.3.7). 

Finally, note that while, by Theorem \ref{homogeneity-Lwow}, the  completion of the quotient Boolean algebra  $Bor([0,1]^2)/J_f(\Haus^1)$ is homogeneous, the  algebra  $Bor([0,1]^2)/J_f(\Haus^1)$ itself is not, since by Sikorski's theorem \cite[15.C]{k}, this would imply the homogeneity of the \s-ideal $J_f(\Haus^1)$. The same is 
also true if $J_f(\Haus^1)$ is replaced by $J_0(\lambda)$.




\section{Comments}\label{comments}

\subsection{Calibrated \s-ideals}\label{calibrated} 
If $X$ is a Henderson compactum, i.e., $dimX=\infty$ but $X$ contains no 1-dimensional 
subcompactum, cf. \cite[Example 5.2.23]{E}, then the \s-ideal $I_X(dim)$ is calibrated, cf. \cite{zap2}, \cite{p}. 

Also, the \s-ideal $J_{\sigma}(\H^1)$ of Borel subsets of the Euclidean square $[0,1]^2$ that 
can be covered by countably many compacta of \s-finite $\H^1$-measure is calibrated, cf. \cite{p-z-2}.

\subsection{The 1-1 or constant property of Sabok and Zapletal}\label{1-1 or constant}
From assertion (i) in Theorem \ref{main} it follows that any calibrated \s-ideal $I$ on a compactum $X$ has the following property: whenever $f:B\w\baire$ is a Borel map on $B\in Bor(X)\setminus I$ with all fibers in $I$, then there exists $C\in Bor(B)\setminus I$ on which $f$ is injective.

Indeed, the fact that this property can be derived from (i) was established by Sabok and Zapletal \cite{s-z} (the proof in \cite{s-z} is based on some forcing related arguments, and a justification in the realm of the classical descriptive set theory can be found in \cite{p-z-1}).

\subsection{Inhomogeneity of $J_f(\H^1)$}\label{inhomogeneity}


As was already proved in Corollary \ref{Haus-inhom}, the \s-ideal $J_f(\H^1)$ on the Euclidean square $[0,1]^2$ is not homogeneous. Here is another proof of this fact. Let $Y\sub [0,1]^2$ be a compactum not in $J_f(\H^1)$ on which $\H^1$ is \s-finite, and let $f: [0,1]^2\w Y$ be any Borel function.
As was recalled in Section \ref{calibrated}, the \s-ideal $J_{\sigma}(\H^1)\supseteq J_f(\H^1)$
is calibrated in the square, and by (iii) in Theorem \ref{main}, there exists a compact set $C$ in $Y$  with $\H^1(C)<\infty$ and $f^{-1}(A)\notin J_{\sigma}(\H^1)$.

\subsection{Homogeneity of $J_f(\mu)$ for \s-finite $\mu$}\label{homogeneity}

The following result shows that the requirement imposed on  $\mu$ to be {\sl non}-\s-finite cannot be dropped from the assumptions of  Proposition \ref{non-hom}(ii). 

\begin{proposition}\label{counterexample}
Let $\nu$ be a \s-finite nonatomic measure on a compactum $X$ such that all nonempty open sets have positive $\nu$-measure, and let $\mu$ be a nonatomic Borel measure on a compactum $Y\not\in J_f(\mu)$.

Then for any $B\in Bor(Y)\setminus J_f(\mu)$ with $\mu(B)<\infty$ there is a Borel map $f:X \w B$ such that,
whenever $A\in J_f(\mu)$, $f^{-1}(A)\in J_f(\nu)$.
 
\end{proposition}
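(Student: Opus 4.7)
If $X\in J_f(\nu)$ then every Borel subset of $X$ already lies in $J_f(\nu)$ and any Borel map (say a constant) works, so assume $X\notin J_f(\nu)$. By $\sigma$-finiteness and inner regularity of $\nu$, decompose $X=F\cup G$, with $F=\bigcup_{n}F_n$ a countable union of pairwise disjoint compacta of finite $\nu$-measure and $G=X\setminus F$ a $G_\delta$ Borel set of $\nu$-measure zero; under our assumption $G\notin J_f(\nu)$. Since $\mu(B)<\infty$ and $B\notin J_f(\mu)$, $B$ is not $\sigma$-compact, so by the Hurewicz theorem it contains a closed-in-$B$ copy $P$ of the Baire space $\baire$, and $P\notin J_f(\mu)$. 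I plan to define $f\colon X\w B$ piecewise: fix a sequence $(b_n)\sub B$, set $f\equiv b_n$ on $F_n$, and on $G$ take $f=g$ for a Borel map $g\colon G\w P$ still to be constructed. For $A\in J_f(\mu)$ covered by finite-$\mu$-measure compacta $K_m$ one gets
\[
  f^{-1}(A)\sub \bigcup_{m}\Bigl(\bigcup_{b_n\in K_m}F_n\Bigr)\cup g^{-1}(K_m\cap P),
\]
whose first piece is automatically a countable union of Borel subsets of finite-$\nu$-measure compacta; the whole problem thus reduces to building $g$ with the property that $g^{-1}(K\cap P)\in J_f(\nu)$ for every compact $K\sub Y$.

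The heart of the argument is the construction of $g$, for which I would imitate the proof of Theorem~\ref{homogeneity-Lwow}: build, inductively and simultaneously, a generalized Hurewicz system $(U_s,L_s)_{s\in\sk}$ with $U_s\sub G$ in $X$ and a matched system $(V_s,M_s)_{s\in\sk}$ in $P$. The crucial new ingredient is the $\sigma$-finiteness of $\nu$: the set $X_{\mathrm{loc}}$ of points of $X$ possessing an open neighborhood of finite $\nu$-measure is open and dense in $X$ (its complement cannot contain a non-empty open set, by a Baire-category argument using $\sigma$-finiteness together with positivity of $\nu$ on open sets), so by also arranging the decomposition above so that $G$ is dense in $X$ one may at each inductive step choose $U_s\sub G\cap X_{\mathrm{loc}}$ with $\overline{U_s}$ of finite $\nu$-measure, forcing each $L_s\sub\overline{U_s}$ to be a compactum of finite $\nu$-measure. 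On the $B$-side the $M_s$ are chosen of finite $\mu$-measure via part~(B) of Lemma~\ref{main lemma} applied with $J_s$ the collection of compacta in $Y$ of finite $\mu$-measure.

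The matched systems yield a natural homeomorphism between the determined sets in $G$ and in $P$, which I take as the restriction of $g$ there; on the remaining part of $G$, which is contained in $\bigcup_{s}L_s\in J_f(\nu)$, I extend $g$ to a Borel map in any convenient way. By the analogue of properties (C4)--(C5) of Lemma~\ref{main lemma}, the preimage $g^{-1}(C)$ of any closed $C\sub P$ is contained in a countable union of the $L_s$'s and so lies in $J_f(\nu)$, completing the reduction. The principal obstacle is carrying out the two inductive selections compatibly so that the finite-$\nu$-measure condition on the closures of the $U_s$ and the finite-$\mu$-measure condition on the $M_s$ hold simultaneously at every stage; it is precisely here that $\sigma$-finiteness of $\nu$ is indispensable, in contrast with the non-$\sigma$-finite setting of Proposition~\ref{non-hom}(ii) where the analogous construction must fail.
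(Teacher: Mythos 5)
Your reduction to constructing $g$ on a $\nu$-null $G_\delta$ part $G$ of $X$ is reasonable, but the construction of $g$ itself has two fatal gaps. First, the claim that the set $X_{\mathrm{loc}}$ of points possessing an open neighbourhood of finite $\nu$-measure is dense is false: $\sigma$-finiteness together with positivity of $\nu$ on open sets does not prevent \emph{every} nonempty open set from having infinite measure. Take pairwise disjoint Cantor sets $K_n\sub[0,1]$ arranged so that every nonempty open interval entirely contains infinitely many of them, and let $\nu=\sum_n\mu_n$ with $\mu_n$ a nonatomic probability measure on $K_n$; this $\nu$ is $\sigma$-finite, nonatomic, infinite on every nonempty open set (so $X_{\mathrm{loc}}=\emptyset$), and $[0,1]\notin J_f(\nu)$, so this is squarely the nontrivial case. (Your Baire-category argument only shows that some finite-measure Borel piece is comeager in some open set, which says nothing about the measure of that open set.) Hence you cannot in general choose the $U_s$ with $\overline{U_s}$ of finite $\nu$-measure. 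Second, even granting the matched Hurewicz systems, the homeomorphism they produce is defined only on the determined set $P_X=\bigcap_n\bigcup\{U_s: s\in\N^n\}$, a thin $G_\delta$ subset of $G$; the remaining part $G\sm P_X$ is \emph{not} contained in $\bigcup_s L_s$ (the $L_s$ lie in $\overline{U_s}\sm U_s$, essentially outside $G$), and it is in general not in $J_f(\nu)$ -- already $U_\emptyset\sm\bigcup_i U_i$ contains nonempty relatively open subsets of $G$. So extending $g$ there ``in any convenient way'' destroys the preimage property. A smaller point: a closed-in-$B$ copy of $\baire$ given by the Hurewicz theorem need not be outside $J_f(\mu)$; you need Solecki's theorem to get a $G_\delta$ set all of whose nonempty relatively open subsets lie outside $J_f(\mu)$.

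The paper proceeds quite differently and avoids both problems. After deleting from $X$ a $\sigma$-compact $\nu$-null union of boundaries of basic open sets (Gelbaum), what remains is a copy of the irrationals, which is exhausted, up to a $\nu$-null set, by pairwise disjoint Cantor sets $C_n$ with $\nu(C_n)\le 1$. Then $g$ is built on \emph{all} of this copy as a uniform limit of homeomorphisms onto a co-countable subset $H$ of $\overline{P}$, each measure-preserving on $C_0\cup\dots\cup C_n$, via Oxtoby's homeomorphic-measures theorem and Pollard's extension theorem. This yields the global inequality $\nu(A)\le\mu(g(A))$ for all Borel $A$, so the preimage of a compactum of finite $\mu$-measure is a closed set of finite $\nu$-measure whose closure in the compactum $X$ still has finite $\nu$-measure -- a single compact witness of membership in $J_f(\nu)$. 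Your local, Hurewicz-system control of measure cannot be patched to achieve this; some such global measure-transporting device is needed.
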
 

\begin{proof}
Let $P\sub B$ be \replaced{a }{the} copy of the irrationals defined as in 	the proof of Theorem \ref{homogeneity-Lwow}(i) and let us adopt the notation from that proof. \added{In particular, $\overline{P} = P \cup \bigcup\{L_s: s\in\sk\}$, the Cantor sets $L_s$ are pairwise disjoint and $\mu(L_s)=1$ for each $s\in \sk$.}

Let us note that  the compactum $\overline{P}$ is zero-dimensional as it contains no non-trivial continuum
 (cf. the proof of (8) in Section \ref{proof of (ii)}). 
Since, moreover,
$\overline{P}\setminus P$ is dense in $\overline{P}$, removing a countable dense set from $\overline{P}\setminus P$, we get a copy of the irrationals $H$ such that
\begin{enumerate}
	
\item[(1)] $P\sub H\sub \overline{P}$,\quad 
$|\overline{P}\setminus H|\leq\aleph_0$.

\end{enumerate}	

\smallskip 
	
(A) Let us assume first that $X$ is a copy of the irrationals.

\smallskip 

The measure $\nu$ being  \s-finite and  nonatomic, \added{by a result of Gelbaum \cite{Ge}}, there are pairwise 	disjoint Cantor sets $C_0, C_1, \dots$ in $X$ with $\nu(C_n)\leq 1$ for each $n\in\N$ such that 
$\nu(X\setminus \bigcup_i C_i)=0$.

Let us fix a complete metric $d$ on $H$.

We shall define inductively homeomorphisms $h_n: X\w H$ such that  for each $n\in\N$
\begin{enumerate}
	
	\item[(2)]  $\nu(A)=\mu(h_n(A))$ for any Borel $A\sub C_n$,
	
	\item[(3)] $h_{n+1}|(C_0\cup\ldots\cup C_{n})=
	h_{n}|(C_0\cup\ldots \cup C_{n})$,
		
	\item[(4)] $d(h_{n+1}(x), h_n(x))\leq 2^{-n}$ for any $x\in X$.
	
\end{enumerate}	

To define $h_0$, using results of Oxtoby \cite{Ox},  we fix a homeomorphism $u: C_0\w u(C_0)\sub L_{\emptyset}\cap H$ such that  $\nu(A)=\mu(u(A))$ for any Borel $A\sub C_0$ (cf. the proof of \ref{claim for 2}), and  let $h_0$ be an extension of $u$ to  a homeomorphism from $X$ onto $H$ whose existence is guaranteed by a theorem of Pollard \cite{Po}.

Assume that $h_n$ is already defined, and let $\U$ be a disjoint cover of $H$ by relatively clopen sets of $d$-diameter $\leq 2^{-n}$.

For each $U\in\U$, we consider $V=h_n^{-1}(U)$ and the homeomorphism $h_{n+1}|V:V\w U$ will be defined as follows.

  On $T=(C_0\cup\ldots\cup C_{n})\cap V$ we let $h_{n+1}$ coincide with $h_{n}$. Since $h_{n}(T)$ is compact and nowhere dense in $H$, $U\setminus 
  h_{n}(T)$ is a nonempty relatively open subset of $H$, so  one can find $L_s$ such that $L_s\cap H \sub U\setminus 
	h_{n}(T)$ (cf. Section \ref{sec:2}). Using again results of Oxtoby \cite{Ox} and Pollard \cite{Po}, we first pick a homeomorphic embedding $w: C_{n}\cap V\w L_s\added{\cap H}$ such that $\nu(A)=\mu(w(A))$, for any Borel $A\sub C_n\cap V$, and then  extend $w$ to a homeomorphism $h_{n+1}|V: V\w U$. 
	
	Then, conditions (2), (3), (4) are met.
	
	Now, (4) guarantees that
	
	\begin{enumerate}
		
		\item[(5)] the sequence $(h_n)$ uniformly converges to a continuous function $g: X\w H$,
		
		\item[(6)] $g|C_n=h_n|C_n$ for $n\in \N$.

	\end{enumerate}	
	
From (2), (6) and the fact that the Cantor sets $g(C_n)$ are pairwise disjoint, cf. (3), we infer that for any Borel set $A\sub X$,
$$
\nu\Bigl(A\cap \bigcup_n C_n\Bigr)=\sum_n \nu(A\cap C_n)=\sum_n \mu(g(A\cap C_n))=\mu\Bigl(g(A\cap \bigcup_n C_n)\Bigr).
$$

Since $\nu(X\setminus \bigcup_n C_n)=0$, we conclude that
	\begin{enumerate}
	
			\item[(7)] $\nu(A)\leq \mu(g(A))$ for any Borel $A\sub X$.
		
		\end{enumerate}

	Let $A\in J_f(\mu)$ and assume that $A\sub \overline{P}$. Then $A\sub \bigcup_j F_j$, where $F_j\sub \overline{P}$ are closed and $\mu(F_j)<\infty$ for every $j\in\N$. It follows, by (5) and (7), that $g^{-1}(F_j)$ are closed sets of finite $\nu$-measure, and hence $g^{-1}(A)\in J_f(\nu)$.
	
	Since the range of $g$ may not be contained in $P$, we shall slightly correct $g$ to get a required map $f:X\w P\sub B$.
	
	Let $M=g^{-1}(\bigcup_s L_s)$. Then, as we have noticed, $M\in J_f(\nu)$. Now, we define $f:X\w P$ so that $f$ coincides with $g$ on $X\setminus M$ and takes $M$ to a point in $P$.
	
	\smallskip 
	
	(B) Now, let $\nu$ be a \s-finite nonatomic  Borel measure on a compactum $X$ such that nonempty open sets have positive $\nu$-measure.
	
	By a result of Gelbaum \cite{Ge},  there  is  a  countable open basis  $(U_n)$  of $X$   such that $\nu(\partial U_n) = 0$ for all  $n\in\N$,  where $\partial U_n$ denotes
	the  boundary  of  $U_n$.
	Then $L=\bigcup_n \partial U_n$ is a \s-compact set  in $X$ with $\nu(L)=0$ such that $X\setminus L$ is a copy of the irrationals.
	
	Let $B$ be a Borel set in $Y$ satisfying the assumptions.
	
	Using (A), we define a Borel map $f|(X\setminus L): X\setminus L \w B$ such that for any Borel $A\in J_f(\mu)$,
	
	$(f|(X\setminus L))^{-1}(A)\in J_f(\nu)$, and we let $f$ send $L$ to a point in $B$.
	
\end{proof}

\subsection{A calibrated \s-ideal which is not coanalytic}\label{calibrated not coanalytic}
If $E$ is a subset of a compactum $X$, $E\neq X$, the \s-ideal $K(E)$ is calibrated but need not be coanalytic.

However, we did not find in the literature examples of calibrated, non-coanalytic  \s-ideals $I$ on compacta $X$ with $\bigcup I=X$.


The following construction provides examples of such \s-ideals of arbitrary high complexity. 

\begin{proposition}
	Let $I$ be a calibrated \s-ideal on a compactum $X$. For each $A\sub [0,1]$ there exists a calibrated \s-ideal $J$ on $[0,1] \times  X$ generated by compact sets and a continuous function $\Phi: [0,1]\w K([0,1] \times  X)$ such that $A=\Phi^{-1}(J)$.
	
\end{proposition}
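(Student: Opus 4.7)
The idea is to encode the set $A$ into the slice structure of compact sets in $J$. I will take
\[
\Phi: [0,1] \w K([0,1]\times X), \qquad \Phi(t) = \{t\}\times X,
\]
which is continuous in the Hausdorff metric (in fact a topological embedding). For a compact $L\sub [0,1]\times X$ and $t\in[0,1]$ write $L_t = \{x\in X : (t,x)\in L\}\in K(X)$ for the vertical slice, and take for $J$ the \s-ideal generated by
\[
\G_0 = \{L\in K([0,1]\times X) : L_t\in I \text{ for every } t\in [0,1]\sm A\};
\]
that is, $J$ consists of those Borel $B\sub [0,1]\times X$ that are contained in a countable union of members of $\G_0$.

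The first step is to verify that $J$ is a proper \s-ideal of Borel sets generated by compact sets. Closedness under Borel subsets and countable unions is immediate, and since $X\notin I$ and $I$ is closed under countable unions, no countable family from $\G_0$ can cover $[0,1]\times X$ provided $[0,1]\sm A\ne\emptyset$ (the trivial case $A=[0,1]$ is handled by letting $J=Bor([0,1]\times X)$, which is vacuously calibrated). A useful byproduct is the characterization $K\in J \iff K\in \G_0$ for any $K\in K([0,1]\times X)$, since the property ``$K_t\in I$'' is preserved under countable unions of compact covers, thanks to $I$ being a \s-ideal of Borel sets.

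The second step is the equation $A=\Phi^{-1}(J)$. The only nontrivial slice of $\Phi(t)$ is $(\Phi(t))_t=X$, and $X\notin I$; so $\Phi(t)\in\G_0$ (equivalently $\Phi(t)\in J$, by the characterization above) holds iff the defining condition of $\G_0$ is vacuous at $s=t$, i.e., iff $t\in A$.

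The third step, which I expect to be the only point requiring genuine argument (everything else is bookkeeping), is calibration of $J$, and here is where calibration of $I$ is used. Suppose $K\in K([0,1]\times X)\sm J$ and $K_n\in J\cap K([0,1]\times X)$ for $n\in\N$. The characterization $J\cap K([0,1]\times X)=\G_0$ produces a coordinate $t_0\in[0,1]\sm A$ with $K_{t_0}\notin I$, while each slice $(K_n)_{t_0}$ lies in $I$. Applying calibration of $I$ inside the slice at $t_0$ yields a compactum $L_0\sub K_{t_0}\sm \bigcup_n(K_n)_{t_0}$ with $L_0\notin I$; then $L:=\{t_0\}\times L_0$ is compact, contained in $K\sm\bigcup_n K_n$, and its slice at $t_0$ is $L_0\notin I$, so $L\notin \G_0$, witnessing calibration of $J$.
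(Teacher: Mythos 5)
Your proposal is correct and follows exactly the paper's construction: the same ideal $J$ generated by compacta whose slices over $[0,1]\sm A$ lie in $I$, and the same map $\Phi(t)=\{t\}\times X$. The paper leaves the calibration check as ``one readily checks''; your slice-wise argument (reduce to a bad coordinate $t_0\notin A$, apply calibration of $I$ in that fibre, and lift the resulting compactum to $\{t_0\}\times L_0$) is precisely the intended verification.
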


\begin{proof}


Let $J$ consist of Borel sets  in  
$[0,1] \times  X$ that can be covered by countably many compact sets $K$ with 
$K_t=\{x\in X: (t,x)\in K \}\in I$, for each $t\not\in A$. 

Since $I$ is calibrated  one readily checks that so is $J$. 

 The function $\Phi(t)=\{t \}\times X$, $t\in [0,1]$, is a continuous map from $[0,1]$ to $K([0,1] \times X)$ and it is clear  that $A=\Phi^{-1}(J)$.
 
 \end{proof}



\subsection{Comparing $Bor(X)/J_0(\mu)$ and $Bor(X)/J_f(\mu)$ }\label{comparing algebras}\mbox{} 

(A) Let $\mu$ be a \s-finite nonatomic measure on a compactum $X$ with $X\notin J_f(\mu)$. Then the consequence of Theorem \ref{main} (iii) indicated in Section \ref{sec:1}, combined with some results of Zapletal \cite{zap} (indicated in Section \ref{sec:1} in the context of the \s-ideal $I(dim)$), show that the forcings associated with the partial orders $Bor(X)\setminus J_0(\mu)$ and  
$Bor(X)\setminus J_f(\mu)$ are not equivalent. 

In particular, neither of the quotient Boolean algebras 
$Bor(X)/J_0(\mu)$ and $Bor(X)/J_f(\mu)$ embeds densely into the completion of the other.

\smallskip 

(B) Let $\mu^h$ be a semifinite but not \s-finite Hausdorff measure on a compactum, associated with a continuous nondecreasing function $h: [0,+\infty]\w [0,+\infty]$ with $h(r)>0$ for $r>0$ and $h(0)=0$, cf. \cite{r}. Then $Bor(X)/ J_f(\mu^h)$ does not embed densely into $Bor(X)/J_0(\mu^h)$.

This was proved in \cite{p-z-2} under the additional assumption that the calibrated \s-ideal  $ J_0(\mu^h)$ has the 1-1 or constant property (cf. Section \ref{1-1 or constant}) but this is now granted by Theorem \ref{main} (i).

It is not clear, however, if 
 $Bor(X)/ J_f(\mu^h)$ can be embedded densely into the completion of $Bor(X)/J_0(\mu^h)$.

\bibliographystyle{amsplain}

\end{document}